\newtheorem{proposition}{Proposition}[section]
\newtheorem{theorem}[proposition]{Theorem}
\newtheorem{lemma}[proposition]{Lemma}
\newtheorem{definition}[proposition]{Definition}
\newtheorem{remark}[proposition]{Remark}
\newtheorem{lgrthm}[proposition]{Algorithm}
\numberwithin{equation}{section}
\renewenvironment{proof}{\smallskip\noindent\emph{\textbf{Proof.}}%
  \hspace{1pt}}{\hspace{-5pt}{\nobreak\quad\nobreak\hfill\nobreak%
    $\square$\vspace{2pt}\par}\smallskip\goodbreak}
\newcommand{\C}[1]{\mathbf{C^{#1}}}
\newcommand{\Cc}[1]{\mathbf{C_c^{#1}}}
\renewcommand{\L}[1]{\mathbf{L^#1}}
\newcommand{\Lloc}[1]{\mathbf{L^{#1}_{\mathbf{loc}}}}
\newcommand{\BV}{\mathbf{BV}}
\newcommand{\modulo}[1]{{\left|#1\right|}}
\newcommand{\norma}[1]{{\left\|#1\right\|}}
\newcommand{\caratt}[1]{{\chi_{\strut#1}}}
\newcommand{\reali}{{\mathbb{R}}}
\newcommand{\R}{\mathbb R}
\newcommand{\N}{{\mathbb N}}
\newcommand{\M}{{\mathcal{M}}}
\newcommand{\interi}{{\mathbb{Z}}}
\renewcommand{\epsilon}{\varepsilon}
\renewcommand{\phi}{\varphi}
\renewcommand{\theta}{\vartheta}
\newcommand{\tv}{\mathinner{\rm TV}}
\renewcommand{\d}[1]{\mathinner{\mathrm{d}{#1}}}
\newcommand{\NORM}[1]{{|\hspace{-1pt}\|{#1}\|\hspace{-1pt}|}}
\newcommand{\brho}{\boldsymbol{\rho}}
\newcommand{\bsigma}{\boldsymbol{\sigma{}}}
\newcommand{\bv}{\boldsymbol{v}}
\newcommand{\dt}{{\Delta t}}
\newcommand{\dx}{{\Delta x}}
\newcommand{\rh}[1]{\rho^{n}_{#1}}
\DeclareMathOperator{\sgn}{sgn}
\newcommand{\vl}[1]{v_{\ell,#1}}
\newcommand{\vr}[1]{v_{r,#1}}
\newcommand{\se}[1]{S_{\ell,#1}}
\newcommand{\sr}[1]{S_{r,#1}}
\newcommand{\be}{\begin{equation}}
\newcommand{\ee}{\end{equation}}
\definecolor{ffqqqq}{rgb}{1.,0.,0.}
\definecolor{uuuuuu}{rgb}{0.26666666666666666,0.26666666666666666,0.26666666666666666}
\let\@fnsymbol\@arabic
\newcommand\appendix@section[1]{%
\refstepcounter{section}%
\orig@section*{Appendix \@Alph\c@section: #1}%
\addcontentsline{toc}{section}{Appendix \@Alph\c@section: #1}%
}
\let\orig@section\section
\g@addto@macro\appendix{\let\section\appendix@section}
\title{A multi-lane macroscopic traffic flow model for simple networks}
\author{Paola Goatin\footnotemark[1] \and Elena Rossi\footnotemark[1]}
\date{}
\begin{document}
\maketitle
\footnotetext[1]{Inria Sophia Antipolis - M\'editerran\'ee,
  Universit\'e C\^ote d'Azur, Inria, CNRS, LJAD, 2004 route des
  Lucioles - BP 93, 06902 Sophia Antipolis Cedex, France. E-mail:
  \texttt{\{paola.goatin, elena.rossi\}@inria.fr}}

\begin{abstract}

  \noindent
  We prove the well-posedness of a system of balance laws inspired
  by~\cite{HoldenRisebro}, describing macroscopically the traffic flow
  on a multi-lane road network. Motivated by real applications, we
  allow for the the presence of space discontinuities both in the
  speed law and in the number of lanes. This allows to describe a
  number of realistic situations.  Existence of solutions follows from
  compactness results on a sequence of Godunov's approximations, while
  $\L1$-stability is obtained by the doubling of variables technique.
  Some numerical simulations illustrate the behaviour of solutions in
  sample cases.

  \medskip

  \noindent\textit{2010~Mathematics Subject Classification:} 35L65,
  90B20, 82B21.

  \medskip

  \noindent\textit{Keywords:} macroscopic multi-lane traffic flow
  model on networks; Godunov scheme; well-posedness.
\end{abstract}


\section{Introduction}

Macroscopic traffic flow models consisting of hyperbolic balance laws
have been developed in the scientific literature starting from the
celebrated Lighthill-Whitham-Richards (LWR)
model~\cite{LW1955,Richards1956}. Despite its simplicity, the LWR
model is able to capture the basic features of road traffic dynamics,
such as congestion formation and propagation. Nevertheless, it cannot
describe many aspects of road traffic complexity. To this end, several
improved models accounting for specific flow characteristics have
subsequently been introduced: second-order models accounting for a
momentum equation (see e.g.~\cite{AwRascle}), multi-population models
distinguishing between different classes of vehicles
(e.g.~\cite{BenzoniColombo}), etc.

In this paper, we are interested in describing carefully the traffic
dynamics on road networks with several lanes, allowing for lane change
and overtaking. Multi-lane models for vehicular traffic have been
proposed in~\cite{CC2006,HoldenRisebro,KW1,KW2}. In the macroscopic setting, these models consist in a system of balance laws in which the transport is expressed by a LWR equation for each lane, and the source term accounts for the lane change rate. In particular, the equations of the system are coupled in the source term only. \\
Aiming to describe realistic situations in detail, we allow for the
speed laws and the number of lane to change along the road. In the
study, for sake of simplicity, we consider the model proposed
in~\cite{HoldenRisebro}, but more general source terms could be
taken into account.

We consider an infinite road described by the real line.  Let
$\M_\ell\subset \N^+$ be the set of indexes of the {\it active} lanes
on $]-\infty, 0[$, with $M_\ell := \modulo{\M_\ell}\geq 1$ its
cardinality, and $\M_r\subset \N^+$ be the set of indexes of the {\it
  active} lanes on $]0,+\infty[$, with $M_r := \modulo{\M_r}\geq 1$.
Let us consider $M\geq \max\{M_\ell, M_r\}$, its choice depending on
the specific situation under study.

To cast the problem in a general setting, we extend the road
considering the same number of lanes $M$ on the left and on the right
of $x=0$. More precisely, we assume that there are $M - M_\ell$ and
$M - M_r$ additional empty lanes on $]-\infty,0[$, respectively
$]0,+\infty[$.  Moreover, we prevent vehicles from passing from the
active to the fictive lanes added, see condition~\eqref{eq:sghost}
below. In the same way, we can consider multiple separate roads, thus
accounting for network nodes.

The problem under consideration is then the following: for
$x \in \reali$ and $t>0$, the vehicle density $\rho_j=\rho_j(t,x)$ on
lane $j$ solves the Cauchy problem
\begin{equation}
  \label{eq:M}
  \left\{
    \begin{array}{l@{\quad}l}
      \partial_t \rho_j + \partial_x f_j (x,\rho_j)
      = S_{j-1}(x,\rho_{j-1}, \rho_j) - S_j (x, \rho_j, \rho_{j+1})
      & j=1, \ldots, M,
      \\
      \rho_j(0,x) = \rho_{o,j} (x)
        & j=1,\ldots, M,
    \end{array}
  \right.
\end{equation}
with
\begin{align}
  \label{eq:vj}
  v_j (x,u) = \
  & H (x) \, \vr{j} (u) + (1 - H (x)) \, \vl{j} (u),
  \\
  \label{eq:flr}
  f_{\ell,j}(u) = \
  & u \, \vl{j} (u), \quad
    f_{r,j} (u) = \
    u \, \vr{j} (u),
  \\
  \label{eq:3}
  f_j(x,u) = \
  & u \, v_j (x,u) 
    =\ H (x) f_{r,j} (u) + (1- H (x)) f_{\ell,j} (u),
\end{align}
for $j=1, \ldots, M$, where $H$ is the Heaviside function.  The
velocities $v_{d,j}$, for $d= \ell, r$ and $j=1,\ldots, M$, are
strictly decreasing positive functions such that $v_{d,j} (1) = 0$. We
assume that each map $f_{d,j} (u) = u \, v_{d,j} (u)$ admits a unique
global maximum in the interval $[0,1]$, attained at $u=\theta^j_d$. We
set
\begin{equation}
  \label{eq:theta}
  \theta^j (x) = H(x) \,  \theta^j_r+ (1- H(x))\, \theta^j_\ell.
\end{equation}
Moreover, we set $\rho_{o,j}:\R\to [0,1]$ for $j=1,\ldots, M,$ and
\begin{align}
  \label{eq:idM}
  &\rho_{o,j}(x) = 0 \quad \mbox{ for }  x\in\, ]-\infty,0[ \mbox{ and } j\not\in\M_\ell,\\
  \label{eq:idM1}
  &\rho_{o,j}(x) = 1 \quad \mbox{ for }  x\in\, ]0,+\infty[ \mbox{ and } j\not\in\M_r.
\end{align}
Concerning the source terms, accounting for the flow rate across
lanes, we define, as in \cite{HoldenRisebro},
\begin{equation}
  \label{eq:2}
  \begin{aligned}
    S_{d,j} (\rho_j, \rho_{j+1}) = \ & \left[ \left( v_{d,j+1}
        (\rho_{j+1}) - v_{d,j} (\rho_j) \right) ^+ \rho_j - \left(
        v_{d,j+1} (\rho_{j+1}) - v_{d,j} (\rho_j) \right) ^-
      \rho_{j+1} \right]
    \\
    = \ & \left( v_{d,j+1} (\rho_{j+1}) - v_{d,j} (\rho_j) \right)
    \left\{
      \begin{array}{l@{\quad}l}
        \rho_j &  v_{d,j+1} (\rho_{j+1}) \geq v_{d,j} (\rho_j) ,
        \\
        \rho_{j+1} &  v_{d,j+1} (\rho_{j+1}) < v_{d,j} (\rho_j) ,
      \end{array}
    \right.
  \end{aligned}
\end{equation}
for $d=\ell,r$ and $j=1, \ldots, M-1$, where
$(a)^+=\max\left\{a,0\right\}$ and $a^-=-\min\{a,0\}$. To account for
separate lanes, such as different roads or fictive lanes, we set
\begin{equation}
  \label{eq:sghost}
  S_{d,j_d} (u,w) = 0 \qquad \mbox{ for some } j_d \in \left\{1,\ldots,M-1\right\},~ d=\ell,r.
\end{equation}
The functions appearing in the source term are then defined as follows
\begin{align}
  \label{eq:4}
  S_j (x,u,w) = \
  &
    H (x) \, \sr{j} (u,w) + (1 - H (x)) \, \se{j} (u,w)
  &
    \mbox{ for } j= \
  & 1, \ldots, M-1,
  \\
  \label{eq:sbordo}
  S_0 (x,u,w) = \
  & S_M (x,u,w) = 0.
  &
\end{align}
For the sake of shortness, introduce the notation
$\brho = (\rho_1, \ldots, \rho_M)$, so that the initial data
associated to problem~\eqref{eq:M}--\eqref{eq:idM}--\eqref{eq:idM1}
read $\brho(0,x) = \brho_o (x)$.

  \begin{remark}
    For simplicity, and with slight abuse of notation, we consider
    $\brho=\brho(t,x)$ for $t>0$, $x\in\, \reali$.  However, we
    will show that, by~\eqref{eq:idM}, \eqref{eq:idM1}
    and~\eqref{eq:sghost}, there holds $\rho_j(t,x) = 0$ for all
    $t>0$, $x\in\, ]-\infty,0[$ and $j\not\in\M_\ell$, respectively
    $\rho_j(t,x) = 1$ for all $t>0$, $x\in\, ]0,+\infty[$ and
    $j\not\in\M_r$.
  \end{remark}

  \medskip

  Following~\cite[Definition~5.1]{KRT2003}, see
  also~\cite[Definition~2.1 and Formula~(5.8)]{KRT2002}
  and~\cite[\S~8.3]{HoldenRisebroBook2015}, we recall the definition
  of \emph{weak entropy solution} for~\eqref{eq:M}--\eqref{eq:idM}--\eqref{eq:idM1}.
  \begin{definition}
    \label{def:sol}
    A map
    $\brho= (\rho_1,\ldots,\rho_M) \in \L\infty ([0,T]\times \reali;
    [0,1]^M) $ is a \emph{weak entropy solution} to the initial value
    problem~\eqref{eq:M} if
    \begin{enumerate}
    \item\label{it:weak} for any
      $\phi \in \Cc1 ([0,T[ \times \reali; \reali)$ and for all
      $j=1,\ldots, M$,
      \begin{align*}
        \int_0^T\int_{\reali} \left(\rho_j \, \partial_t \phi
        + f_j (x,\rho_j) \, \partial_x \phi
        + \left(S_{j-1} (x,\rho_{j-1}, \rho_j) - S_j (x, \rho_j, \rho_{j+1})\right) \phi
        \right) \d{x} \d{t} &
        \\
        + \int_\reali \rho_{o,j} \, \phi (0,x)  \d{x} & = 0.
      \end{align*}
    \item
      \label{it:entropy} for any
      $\phi \in \Cc1 ([0,T[ \times \reali; \reali^+)$, for any
      $c\in[0,1]$ and for all $j=1,\ldots, M$
      \begin{align*}
        \int_0^T\int_{\reali} \left\{\modulo{\rho_j-c} \, \partial_t \phi
        + \sgn (\rho_j - c) \left(f_j (x,\rho_j) - f_j (x, c)\right) \partial_x \phi
        \right.
        &
        \\
        \left. \qquad
        + \sgn (\rho_j - c)
        \left(S_{j-1} (x,\rho_{j-1}, \rho_j) - S_j (x, \rho_j, \rho_{j+1})\right) \phi
        \right\} \d{x} \d{t}&
        \\
        + \int_0^T\modulo{f_{r,j} (c) - f_{\ell,j} (c)} \, \phi (t,0) \d{t}
        + \int_\reali \modulo{\rho_{o,j} -c} \phi (0,x)  \d{x}
        & \geq\  0.
      \end{align*}
    \end{enumerate}
  \end{definition}

  The rest of the paper is organised as follows. In
  Section~\ref{sec:existence} we construct a sequence of approximate
  solutions based on Godunov finite volume scheme and we prove its
  convergence towards a solution of~\eqref{eq:M}. We then provide a
  $\L1$-stability estimate with respect to the initial data, which
  implies the uniqueness of solutions. Specific situations and the
  corresponding numerical simulations are discussed in
  Section~\ref{sec:num}.

  \section{Well-posedness}
  \label{sec:existence}

  We define the map $\bv: [0,1] \to \reali^{2M}$ by setting
  $\bv_j = \vl{j}$ and $ \bv_{M+j}= \vr{j}$, for $j=1, \ldots, M$.
  Moreover we define
  \begin{equation}
    \label{eq:normav}
    \begin{aligned}
      V_{\max} = \ & \norma{\bv}_{\C0([0,1];\reali^{2 M})} =
      \max_{\substack{j=1,\dots,M\\d=\ell,r}}
      \norma{v_{d,j}}_{\L\infty ([0,1];\reali)},
      \\
      \mathcal{V} = \ & \norma{\bv}_{\C1([0,1];\reali^{2M})} =
      \max_{\substack{j=1,\dots,M\\d=\ell,r}}
      \norma{v_{d,j}}_{\L\infty ([0,1];\reali)} +
      \max_{\substack{j=1,\dots,M\\d=\ell,r}}
      \norma{v'_{d,j}}_{\L\infty ([0,1];\reali)}.
    \end{aligned}
  \end{equation}
  We introduce the following quantity, which corresponds to the
  $\L1$--norm of the vector $\brho$ computed on \emph{active} lanes:
  \begin{equation}
    \label{eq:norma1}
    \NORM{\brho} =
    \sum_{j\in\M_\ell} \norma{\rho_j}_{\L1 (]-\infty,0[)}
    +
    \sum_{j\in\M_r} \norma{\rho_j}_{\L1 (]0,+\infty[)}.
  \end{equation}

  \medskip

  Introduce a uniform space mesh of width $\dx$ and a time step $\dt$,
  subject to a CFL condition, to be detailed later on. For
  $k\in \interi$ set
  \begin{align*}
    x_k = \
    & \left(k+\frac12\right) \dx,
    &
      x_{k - 1/2} = \
    & k \dx,
  \end{align*}
  where $x_k$ denotes the centre of the cell, while $x_{k\pm 1/2}$ its
  interfaces. Observe that $x=0$ corresponds to $x_{-1/2}$, so that
  non negative integers denote the cells on the positive part of the
  $x$-axis. Set $N_T= \lfloor T/\dt \rfloor$ and let $t^n= n \, \dt$,
  for $n=0, \ldots, N_T$, be the time mesh. Set $\lambda = \dt / \dx$.
  Approximate the initial data in the following way: for
  $j=1,\ldots,M$, for $k\in \interi$
  \begin{displaymath}
    \rho_{j,k}^0 = \frac1\dx \int_{x_{k-1/2}}^{x_{k+1/2}}\rho_{o,j} (x) \d{x}.
  \end{displaymath}
  Define a piece-wise constant solution $\brho_\Delta$ to~\eqref{eq:M}
  as, for $j=1,\ldots,M$,
  \begin{equation}
    \label{eq:6}
    \rho_{j,\Delta} (t,x)  = \rho_{j,k}^n
    \quad \mbox{ for } \quad
    \left\{
      \begin{array}{l}
        t \in [t^n, t^{n+1}[,\\
        x \in [x_{k-1/2}, x_{k+1/2}[,
      \end{array}
    \right.
    \quad \mbox{ where } \quad
    \begin{array}{l}
      n= 0, \ldots, N_t -1,\\
      k \in \interi,
    \end{array}
  \end{equation}
  through a Godunov type scheme (see~\cite{AJVG2004}) together with
  operator splitting, to account for the source terms:
  \begin{lgrthm}
    \label{alg:1}
    \begin{align}
      \label{eq:fx}
      & F_j (x, u, w) =
        \left\{
        \begin{array}{l@{\quad \mbox{ if }}l}
          \min\left\{f_j \left(x, \min\{u,\theta^j (x)\}\right),
          f_j \left(x, \max\{w, \theta^j (x)\}\right) \right\}
          & x\neq 0,
          \\
          \min\left\{f_{\ell,j} \left(\min\{u, \theta^j_\ell\}\right),
          f_{r,j} \left(\max\{w, \theta^j_r \}\right) \right\}
          & x=0,
        \end{array}
            \right.
      \\
      &\texttt{for } n=0,\ldots, N_T-1 \nonumber
      \\
      & \quad  \texttt{for }  j=1,\ldots, M, \texttt{for } k \in \interi \nonumber
      \\
      \label{eq:scheme}
      & \quad \quad \quad
        \rho_{j,k}^{n+1/2} = \rh{j,k} - \lambda
        \left[
        F_j (x_{k+1/2}, \rh{j,k}, \rh{j,k+1}) - F_j (x_{k-1/2}, \rh{j,k-1}, \rh{j,k})
        \right]
      \\
      \nonumber
      & \quad  \texttt{end}
      \\
      & \quad  \texttt{for }  j=1,\ldots, M, \texttt{for } k \in \interi \nonumber
      \\ \label{eq:scheme2}
      &\quad\quad\quad
        \rho_{j,k}^{n+1} =
        \rho_{j,k}^{n+1/2} + \dt \, S_{j-1} (x_k, \rho^{n+1/2}_{j-1,k}, \rho^{n+1/2}_{j,k})
        - \dt \, S_j (x_k,\rho^{n+1/2}_{j,k}, \rho^{n+1/2}_{j+1,k})
      \\
      \nonumber
      & \quad \texttt{end}
      \\
      \nonumber
      &\texttt{end}
    \end{align}
  \end{lgrthm}

\begin{remark}\label{rem:resta0}
  Observe that, under hypotheses~\eqref{eq:idM}--\eqref{eq:idM1}, for
  all $n=0, \dots, N_T-1$ and $k\leq -1$ (corresponding to $x<0$), it
  holds $\rh{j,k}=0$ for all $j\not\in\M_\ell$. In particular, no wave
  can move backward into the segment $]-\infty,0[$ for
  $j\not\in\M_\ell$.  Similarly, for all $n=0, \dots, N_T-1$ and
  $k\geq 0$ (corresponding to $x>0$), it holds $\rh{j,k}=1$ for all
  $j\not\in\M_r$. In particular, no wave can move forward into the
  segment $]0,+\infty[$ for $j\not\in\M_r$.
\end{remark}

\subsection{Positivity and upper bound}
\label{sec:pos}

We prove that, under a suitable CFL condition, if the initial data
take values in the interval $[0,1]$, then also the approximate
solution constructed via Algorithm~\ref{alg:1} attains values in the
same interval $[0,1]$.

\begin{lemma}
  \label{lem:pos}
  Let $\brho_o \in \L\infty (\reali; [0,1]^M)$. Assume that
  \begin{equation}
    \label{eq:cfl}
    \lambda \, \mathcal{V}\leq \frac12,
  \end{equation}
  with $\mathcal{V}$ as in~\eqref{eq:normav}.  Then, for all $t>0$ and
  $x \in \reali$, the piece-wise constant approximate solution
  $\brho_\Delta$ constructed through Algorithm~\ref{alg:1} is such
  that $0 \leq \rho_{j, \Delta} (t,x) \leq 1$, for all $j=1,\dots, M$.
\end{lemma}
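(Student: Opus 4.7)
My plan is to proceed by induction on $n$, following the two substeps of Algorithm~\ref{alg:1}. The base case $n=0$ is immediate since each $\rho_{j,k}^0$ is a cell average of $\rho_{o,j}\in[0,1]$. Assuming $\rh{j,k}\in[0,1]$ for all $j,k$, I would show separately that the Godunov step~\eqref{eq:scheme} produces $\rho_{j,k}^{n+1/2}\in[0,1]$ and that the source step~\eqref{eq:scheme2} preserves this bound.

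For the convective substep, I would use the standard monotonicity argument. The flux $F_j(x,u,w)$ in~\eqref{eq:fx} is the Godunov numerical flux for the scalar law with flux $f_j(x,\cdot)$: in both cases ($x\neq 0$ and $x=0$) it is non-decreasing in $u$ and non-increasing in $w$. Viewing~\eqref{eq:scheme} as $\rho_{j,k}^{n+1/2}=H(\rh{j,k-1},\rh{j,k},\rh{j,k+1})$, the partial derivatives $\partial_a H$ and $\partial_c H$ are non-negative, while $\partial_b H=1-\lambda\bigl(\partial_u F_j(x_{k+1/2},\cdot,\cdot)-\partial_w F_j(x_{k-1/2},\cdot,\cdot)\bigr)\geq 1-2\lambda\mathcal{V}\geq 0$ by the CFL~\eqref{eq:cfl}. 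So $H$ is componentwise non-decreasing, and since $f_{d,j}(0)=f_{d,j}(1)=0$ implies $F_j(x,0,0)=F_j(x,1,1)=0$, we get $H(0,0,0)=0$ and $H(1,1,1)=1$, forcing $\rho_{j,k}^{n+1/2}\in[0,1]$.

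The source substep~\eqref{eq:scheme2} is the delicate part, and I expect it to be the main obstacle: it is a pointwise-in-$k$ forward Euler step with no spatial differencing, so the $[0,1]$ bound must come from structural properties of~\eqref{eq:2}. I would first check the endpoints: setting $\rho_{j,k}^{n+1/2}=0$ in~\eqref{eq:2} kills the outflow from lane $j$, leaving $S_{j-1}(\cdot,\rho_{j-1,k}^{n+1/2},0)\geq 0$ and $S_j(\cdot,0,\rho_{j+1,k}^{n+1/2})\leq 0$; setting $\rho_{j,k}^{n+1/2}=1$ kills the inflow to lane $j$ through the crucial relation $v_{d,j}(1)=0$. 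For generic values I would split $S_{j-1}-S_j$ into non-negative inflow and outflow parts and bound the outflow by $2V_{\max}\,\rho_{j,k}^{n+1/2}$ (yielding $\rho_{j,k}^{n+1}\geq(1-2\dt V_{\max})\rho_{j,k}^{n+1/2}$) and the inflow by $2\mathcal{V}(1-\rho_{j,k}^{n+1/2})$, exploiting $v_{d,j}(\rho_j)\leq\|v'_{d,j}\|_\infty(1-\rho_j)$ (yielding $1-\rho_{j,k}^{n+1}\geq(1-2\dt\mathcal{V})(1-\rho_{j,k}^{n+1/2})$). Both factors are non-negative under the CFL~\eqref{eq:cfl}, closing the induction; note how the upper-bound half genuinely relies on $v_{d,j}$ vanishing at $\rho=1$, without which the inflows could in principle drive $\rho_j$ above one.
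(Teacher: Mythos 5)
Your proof is correct, and its treatment of the source substep takes a genuinely different (and tidier) route than the paper. For the convective step the paper simply invokes the known $[0,1]$-invariance of the Godunov scheme with discontinuous flux (citing Lemma~4.3 of the reference used for~\eqref{eq:fx}), whereas you reprove it via the standard monotone-scheme argument ($F_j$ monotone in each argument, $\partial_b H\geq 1-2\lambda\mathcal{V}\geq 0$ under~\eqref{eq:cfl}, and $F_j(x,0,0)=F_j(x,1,1)=0$, including at $x=0$); both are fine and equivalent in substance. For the source substep the paper fixes a representative ``complete'' lane ($j=2$), distinguishes the four sign configurations of $v_{d,j+1}(\rho_{j+1})-v_{d,j}(\rho_j)$ and $v_{d,j}(\rho_j)-v_{d,j-1}(\rho_{j-1})$, and in each case uses the mean value theorem on $v_{d,2}$ together with $v_{d,2}(1)=0$; your inflow/outflow splitting with the uniform bounds $\mathrm{outflow}\leq 2V_{\max}\rho_{j}$ and $\mathrm{inflow}\leq 2v_{d,j}(\rho_j)\leq 2\mathcal{V}(1-\rho_j)$ reaches the same conclusions $\rho^{n+1}_{j,k}\geq(1-2\dt V_{\max})\rho^{n+1/2}_{j,k}$ and $1-\rho^{n+1}_{j,k}\geq(1-2\dt\mathcal{V})(1-\rho^{n+1/2}_{j,k})$ without any case analysis, and it automatically covers the boundary lanes $j=1,M$ and the decoupled lanes of~\eqref{eq:sghost}, since dropping terms only improves both bounds; the Lipschitz estimate $v_{d,j}(\rho)\leq\|v'_{d,j}\|_\infty(1-\rho)$ is exactly the paper's MVT step in disguise. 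One small point you should make explicit: the CFL condition~\eqref{eq:cfl} alone gives $\dt\,\mathcal{V}\leq\dx/2$, so the nonnegativity of the factors $1-2\dt V_{\max}$ and $1-2\dt\mathcal{V}$ additionally requires $\dx\leq 1$ (the paper states ``since $\dx<1$'' at the corresponding step); this is harmless but should be said.
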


\begin{proof}
  By induction, assume that $0 \leq \rh{j,k} \leq 1$ for all
  $k \in \interi$ and $j=1,\dots, M$.  Consider~\eqref{eq:scheme}: it
  is well known that, for a Godunov type scheme with discontinuous
  flux function, it holds $0 \leq \rho_{j,k}^{n+1/2} \leq 1$,
  see~\cite[Lemma~4.3]{AJVG2004}. We now focus on the remaining step,
  involving the source term. In particular, fix $k\geq 0$,
  corresponding to $x>0$, the other case being entirely similar.
  Exploiting~\eqref{eq:4}, equation~\eqref{eq:scheme2} reads
  \begin{displaymath}
    \rho_{j,k}^{n+1} = \rho_{j,k}^{n+1/2}
    + \dt \, \sr{j-1} ( \rho^{n+1/2}_{j-1,k}, \rho^{n+1/2}_{j,k})
    - \dt \, \sr{j} (\rho^{n+1/2}_{j,k}, \rho^{n+1/2}_{j+1,k}).
  \end{displaymath}
  To improve readability, in what follows we omit the index
  $n+1/2$. Moreover, we take into account a \emph{complete} case, in
  which the source term contains the contributions from both the
  previous and the subsequent lane. Without loss of generality, we
  take $j=2$ and we assume both
  $\sr{1} ( \rho_{1,k}, \rho_{2,k}) \not= 0$ and
  $\sr{2} (\rho_{2,k}, \rho_{3,k}) \not= 0$.  By~\eqref{eq:scheme2}
  and~\eqref{eq:2} we obtain
  \begin{align}
    \label{eq:7}
    \rho_{2,k}^{n+1} = \
    & \rho_{2,k}
      + \dt \, \sr{1} ( \rho_{1,k}, \rho_{2,k})
      - \dt \, \sr{2} (\rho_{2,k}, \rho_{3,k})
    \\ \nonumber
    = \
    & \rho_{2,k} + \dt \left[
      \left( v_{r,2} (\rho_{2,k}) - v_{r,1} (\rho_{1,k}) \right) ^+ \rho_{1,k}
      -  \left( v_{r,2} (\rho_{2,k}) - v_{r,1} (\rho_{1,k}) \right) ^- \rho_{2,k}\right]
    \\ \nonumber
    &\qquad \!\! - \dt \left[
      \left( v_{r,3} (\rho_{3,k}) - v_{r,2} (\rho_{2,k}) \right) ^+ \rho_{2,k}
      - \left( v_{r,3} (\rho_{3,k}) - v_{r,2} (\rho_{2,k}) \right) ^- \rho_{3,k}
      \right].
  \end{align}
  There are four possibilities:
  \begin{center}
    \begin{tabular}{c|c|c}
      & $\boldsymbol{v_{r,2} (\rho_{2,k}) \geq v_{r,1} (\rho_{1,k})}$
      & $\boldsymbol{v_{r,2} (\rho_{2,k}) < v_{r,1} (\rho_{1,k})}$
      \\
      \hline
      $\boldsymbol{v_{r,3} (\rho_{3,k}) \geq v_{r,2} (\rho_{2,k}) }$
      &
        Case~\ref{item:++}
      &
        Case~\ref{item:-+}
      \\
      \hline
      $\boldsymbol{v_{r,3} (\rho_{3,k}) < v_{r,2} (\rho_{2,k}) }$
      & Case~\ref{item:+-}
      & Case~\ref{item:--}
    \end{tabular}
  \end{center}
  We analyse them in details.
  \begin{enumerate}[label=\textbf{\Alph*.}]
  \item\label{item:++} Equation~\eqref{eq:7} reads
    \begin{align*}
      \rho_{2,k}^{n+1} =  \
      & \rho_{2,k} +
        \dt  \left( v_{r,2} (\rho_{2,k}) - v_{r,1} (\rho_{1,k}) \right) \rho_{1,k}
        - \dt  \left( v_{r,3} (\rho_{3,k}) - v_{r,2} (\rho_{2,k}) \right) \rho_{2,k}
      \\
      \geq \
      & \rho_{2,k}
        - \dt  \left( v_{r,3} (\rho_{3,k}) - v_{r,2} (\rho_{2,k}) \right) \rho_{2,k}
      \\
      \geq \
      & \rho_{2,k} \left( 1 - \dt \, v_{r,3} (\rho_{3,k}) \right)
      \\
      \geq \
      & \rho_{2,k} \left( 1 - \dt \, V_{\max}\right)
      \\
      \geq \
      & 0,
    \end{align*}
    by the CFL condition~\eqref{eq:cfl}, since $\dx <1$. Moreover,
    since $v_{r,2}(1) =0$ and $\rho_{2,k}\leq 1$,
    \begin{align*}
      \rho_{2,k}^{n+1} =  \
      & \rho_{2,k} +
        \dt  \left( v_{r,2} (\rho_{2,k}) - v_{r,1} (\rho_{1,k}) \right) \rho_{1,k}
        - \dt  \left( v_{r,3} (\rho_{3,k}) - v_{r,2} (\rho_{2,k}) \right) \rho_{2,k}
      \\
      \leq \
      & \rho_{2,k} +
        \dt \,  v_{r,2} (\rho_{2,k}) \, \rho_{1,k}
        + \dt \,  v_{r,2} (\rho_{2,k}) \,  \rho_{2,k}
      \\
      = \
      &  \rho_{2,k} +
        \dt \, \vr{2}' (\sigma) \left(\rho_{2,k} - 1\right)\left(\rho_{1,k} + \rho_{2,k}\right)
      \\
      = \
      & \rho_{2,k} \left( 1 + \dt \,\vr{2}' (\sigma)  \left(\rho_{1,k} + \rho_{2,k}\right) \right)
        - \dt \,\vr{2}' (\sigma) \left(\rho_{1,k} + \rho_{2,k}\right)
      \\
      \leq \
      & 1 + \dt \,\vr{2}' (\sigma) \left(\rho_{1,k} + \rho_{2,k}\right)
        - \dt \,\vr{2}' (\sigma) \left(\rho_{1,k} + \rho_{2,k}\right)
      \\
      = \
      & 1,
    \end{align*}
    with $\sigma \in \,]\rho_{2,k},1[$ and we exploit the fact that
    $1 + \dt \,\vr{2}' \left(\rho_{1,k} + \rho_{2,k}\right) \geq 0$,
    due to the CFL condition~\eqref{eq:cfl}.

  \item\label{item:+-} By equation~\eqref{eq:7} and the hypotheses on
    the signs, it follows immediately that
    \begin{displaymath}
      \rho_{2,k}^{n+1} =
      \rho_{2,k} + \dt  \left( v_{r,2} (\rho_{2,k}) - v_{r,1} (\rho_{1,k}) \right) \rho_{1,k}
      - \dt  \left( v_{r,3} (\rho_{3,k}) - v_{r,2} (\rho_{2,k}) \right) \rho_{3,k}
      \geq 0.
    \end{displaymath}
    Moreover, since $v_{d,2} (1)=0$ and $\rho_{2,k}\leq 1$, we get
    \begin{align*}
      \rho_{2,k}^{n+1} = \
      & \rho_{2,k} + \dt  \left( v_{r,2} (\rho_{2,k}) - v_{r,1} (\rho_{1,k}) \right) \rho_{1,k}
        - \dt  \left( v_{r,3} (\rho_{3,k}) - v_{r,2} (\rho_{2,k}) \right) \rho_{3,k}
      \\
      \leq  \
      & \rho_{2,k} +
        \dt \,  v_{r,2} (\rho_{2,k}) \, \rho_{1,k}
        + \dt \,  v_{r,2} (\rho_{2,k}) \,  \rho_{3,k}
      \\
      = \
      &  \rho_{2,k} +
        \dt \, \vr{2}' (\sigma) \left(\rho_{2,k} - 1\right)\left(\rho_{1,k} + \rho_{3,k}\right)
      \\
      = \
      & \rho_{2,k} \left( 1 + \dt \,\vr{2}' (\sigma) \left(\rho_{1,k} + \rho_{3,k}\right) \right)
        - \dt \,\vr{2}' (\sigma) \left(\rho_{1,k} + \rho_{3,k}\right)
      \\
      \leq \
      & 1 + \dt \,\vr{2}' (\sigma) \left(\rho_{1,k} + \rho_{3,k}\right)
        - \dt \,\vr{2}' (\sigma) \left(\rho_{1,k} + \rho_{3,k}\right)
      \\
      = \
      & 1,
    \end{align*}
    where $\sigma \in\, ]\rho_{2,k},1[$.

  \item\label{item:-+} By equation~\eqref{eq:7} and the hypotheses on
    the sign, we get
    \begin{align*}
      \rho_{2,k}^{n+1} = \
      &  \rho_{2,k} + \dt \left[
        \left( v_{r,2} (\rho_{2,k}) - v_{r,1} (\rho_{1,k}) \right) \rho_{2,k}
        - \left( v_{r,3} (\rho_{3,k}) - v_{r,2} (\rho_{2,k}) \right) \rho_{2,k}
        \right]
      \\
      \geq \
      & \rho_{2,k} \left(1 - \dt \, v_{r,1} (\rho_{1,k})  - \dt \, v_{r,3} (\rho_{3,k}) \right)
      \\
      \geq
      & \rho_{2,k} (1 - 2 \, \dt \, V_{\max})
      \\
      \geq \
      & 0,
    \end{align*}
    by the CFL condition~\eqref{eq:cfl}, since $\dx <1$.  Moreover,
    since $v_{r,2} (\rho_{2,k}) - v_{r,1} (\rho_{1,k}) < 0$ and
    $v_{r,3} (\rho_{3,k}) - v_{r,2} (\rho_{2,k})\geq 0$, we get
    \begin{displaymath}
      \rho_{2,k}^{n+1} =
      \rho_{2,k} + \dt \left[
        \left( v_{r,2} (\rho_{2,k}) - v_{r,1} (\rho_{1,k}) \right) \rho_{2,k}
        - \left( v_{r,3} (\rho_{3,k}) - v_{r,2} (\rho_{2,k}) \right) \rho_{2,k}
      \right]
      \leq  \rho_{2,k} \leq 1.
    \end{displaymath}

  \item\label{item:--} By equation~\eqref{eq:7} and the CFL
    condition~\eqref{eq:cfl} we obtain
    \begin{align*}
      \rho_{2,k}^{n+1} = \
      &  \rho_{2,k} + \dt \left[
        \left( v_{r,2} (\rho_{2,k}) - v_{r,1} (\rho_{1,k}) \right) \rho_{2,k}
        -  \left( v_{r,3} (\rho_{3,k}) - v_{r,2} (\rho_{2,k}) \right)  \rho_{3,k}
        \right]
      \\
      \geq \
      &  \rho_{2,k} + \dt \left( v_{r,2} (\rho_{2,k}) - v_{r,1} (\rho_{1,k}) \right) \rho_{2,k}
      \\
      \geq \
      & \rho_{2,k} \left( 1 - \dt \, v_{r,1} (\rho_{1,k})\right)
      \\
      \geq \
      & \rho_{2,k} (1 - \dt \, V_{\max})
      \\
      \geq \
      & 0.
    \end{align*}
    Moreover, since $\vr{2}(1)=0$ and $\rho_{2,k}\leq 1$,
    \begin{align*}
      \rho_{2,k}^{n+1} = \
      &  \rho_{2,k} + \dt \left[
        \left( v_{r,2} (\rho_{2,k}) - v_{r,1} (\rho_{1,k}) \right) \rho_{2,k}
        -  \left( v_{r,3} (\rho_{3,k}) - v_{r,2} (\rho_{2,k}) \right)  \rho_{3,k}
        \right]
      \\
      \leq \
      & \rho_{2,k} +
        \dt \,  v_{r,2} (\rho_{2,k}) \, \rho_{2,k}
        + \dt \,  v_{r,2} (\rho_{2,k}) \,  \rho_{3,k}
      \\
      = \
      &  \rho_{2,k} +
        \dt \, \vr{2}' (\sigma) \left(\rho_{2,k} - 1\right)\left(\rho_{2,k} + \rho_{3,k}\right)
      \\
      = \
      & \rho_{2,k} \left( 1 + \dt \,\vr{2}' (\sigma) \left(\rho_{2,k} + \rho_{3,k}\right) \right)
        - \dt \,\vr{2}' (\sigma) \left(\rho_{1,k} + \rho_{3,k}\right)
      \\
      \leq \
      & 1 + \dt \,\vr{2}' (\sigma) \left(\rho_{2,k} + \rho_{3,k}\right)
        - \dt \,\vr{2}' (\sigma) \left(\rho_{2,k} + \rho_{3,k}\right)
      \\
      = \
      & 1,
    \end{align*}
    where $\sigma \in \,]\rho_{2,k},1[$.
  \end{enumerate}
  Hence, we conclude that $\rho_{j,k}^{n+1} \in[0,1]$ for all
  $j=1,\dots,M$ and $k \in \interi$.
\end{proof}

\subsection{\texorpdfstring{$\L1$}{L1}--bound}
\label{sec:l1}

The following Lemma shows that, if the initial datum $\brho_o$
satisfies $\NORM{\brho_o}< +\infty$, i.e.~it is in $\L1$ on the
\emph{active} lanes, the same holds for the corresponding
solution. Moreover, the $\L1$--norm \eqref{eq:norma1} is constant,
thus the total number of vehicles is preserved over time.
\begin{lemma}
  \label{lem:l1}
  Let $\brho_o \in (\L1 \cap \L\infty) (\reali; [0,1]^M)$.
  Let $\brho_o \in \L\infty (\reali; [0,1]^M)$, with
    $\NORM{\brho_o}< +\infty$ .  Under the CFL
  condition~\eqref{eq:cfl}, the piece-wise approximate solution
  $\brho_\Delta$ constructed through Algorithm~\ref{alg:1} is such
  that, for all $t>0$,
  \begin{equation}
    \label{eq:l1}
    \NORM{\brho_\Delta (t)}=
    \NORM{\brho_o}.
  \end{equation}
\end{lemma}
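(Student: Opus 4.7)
The plan is to proceed by induction on the time step $n$, showing that each of the two substeps of Algorithm~\ref{alg:1} individually conserves the quantity $\NORM{\cdot}$. Since Lemma~\ref{lem:pos} ensures $\rh{j,k},\rho^{n+1/2}_{j,k}\in[0,1]$, absolute values may be dropped, and inductively the sequence $(\rh{j,k})_k$ is summable on the active sides, so in particular $\rh{j,k}\to 0$ at the infinity of each active side.

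For the Godunov substep~\eqref{eq:scheme}, I would fix a lane $j$ and sum the scheme on the cells $k\leq -1$ (when $j\in\M_\ell$) or $k\geq 0$ (when $j\in\M_r$). The fluxes telescope: at $\pm\infty$ they vanish because $\rh{j,k}\to 0$ and $F_j(x,0,0)=0$, while the contribution at $x_{-1/2}=0$ equals $\pm\dt\, F_j(0,\rh{j,-1},\rh{j,0})$, with sign depending on the side. When we sum over $j\in\M_\ell\cup\M_r$, a lane $j\in\M_\ell\cap\M_r$ contributes $-\dt\,F_j(0,\cdot,\cdot)+\dt\,F_j(0,\cdot,\cdot)=0$. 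For $j\in\M_\ell\setminus\M_r$, Remark~\ref{rem:resta0} gives $\rh{j,0}=1$, and by~\eqref{eq:fx} together with $f_{r,j}(1)=0$ one gets $F_j(0,\rh{j,-1},1)=0$; symmetrically, for $j\in\M_r\setminus\M_\ell$, $\rh{j,-1}=0$ and $f_{\ell,j}(0)=0$ give $F_j(0,0,\rh{j,0})=0$. Hence this step preserves $\NORM{\brho_\Delta}$.

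For the source substep~\eqref{eq:scheme2}, I would fix $k$ and sum the increments over $j\in\M_d$, with $d=\ell$ if $k\leq -1$ and $d=r$ if $k\geq 0$. Rewriting via~\eqref{eq:4}, between two consecutive active lanes the term $-S_{d,j}$ appearing on lane $j$ cancels the $+S_{d,j}$ appearing on lane $j+1$; the leftover endpoints are either of the form $j=0$ or $j=M$, killed by~\eqref{eq:sbordo}, or an index $j_d$ sitting at the interface between an active block and a fictive block of $\M_d$, killed by~\eqref{eq:sghost}. Combining with the initial matching $\NORM{\brho_\Delta(0)}=\NORM{\brho_o}$ (which holds because $x_{-1/2}=0$ makes the mesh respect the interface) closes the induction and yields~\eqref{eq:l1}.

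The main technical delicacy that I foresee is the second point: one must argue that in the $j$-summation of the source terms, the only non-cancelling residues land precisely at the indices $j_d$ where~\eqref{eq:sghost} applies. This requires reading~\eqref{eq:sghost} as placing zero-source interfaces at every switch of the index set $\M_d$ between active and fictive lanes, and checking consistency with Remark~\ref{rem:resta0} so that any source evaluated across such a boundary indeed vanishes.
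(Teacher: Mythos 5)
Your proof is correct and follows essentially the same strategy as the paper: induction in $n$, with each substep of Algorithm~\ref{alg:1} shown to preserve $\NORM{\cdot}$ --- the Godunov step by conservativity, the source step by telescoping in $j$ via~\eqref{eq:sbordo} and~\eqref{eq:sghost}. The only difference is one of detail: the paper cites the conservativity of the Godunov scheme from the literature and leaves the active/fictive bookkeeping implicit, whereas you prove it by telescoping and make explicit the vanishing of the interface fluxes at $x=0$ and the role of~\eqref{eq:sghost} (consistently with Remark~\ref{rem:resta0}), which is exactly the reading the paper intends.
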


\begin{proof}
  By induction, assume that~\eqref{eq:l1} holds for $t^n = n \, \dt$.
  The Godunov type scheme~\eqref{eq:scheme} is conservative,
  see~\cite{AJVG2004}, hence
  \begin{displaymath}
    \NORM{\brho^{n+1/2}} =
    \dx \sum_{j\in\M_\ell}\sum_{k\leq -1} \modulo{\rho^{n+1/2}_{j,k}}
    +
    \dx \!\!\!\sum_{j\in\M_r} \sum _{k\geq 0}\modulo{\rho^{n+1/2}_{j,k}}
    =
    |\hspace{-1pt}\norma{\brho_o}\hspace{-1pt}|.
  \end{displaymath}
  Pass now to~\eqref{eq:scheme2}: by the positivity of $\brho_\Delta$,
  see Lemma~\ref{lem:pos}, and the assumptions on the source
  terms~\eqref{eq:sbordo}, it follows immediately that
  $ \NORM{\brho^{n+1}}= \NORM{\brho^{n+1/2}}= \NORM{\brho_o}$.
\end{proof}

\subsection{\texorpdfstring{$\L1$}{L1} continuity in time}
\label{sec:l1cont-time}

Following the idea introduced in~\cite[Lemma~3.3]{KRT2002}, we now
prove the $\L1$-continuity in time of the numerical approximation,
constructed through Algorithm~\ref{alg:1}. The result is of key
importance in the subsequent analysis.

\begin{proposition}
  \label{prop:l1cont-time}
  Let $\brho_o \in \BV (\reali; [0,1]^M)$ with
  $\NORM{\brho_o}< +\infty$ .  Assume that the CFL
  condition~\eqref{eq:cfl} holds.  Then, for $n=0, \ldots, N_T-1$
  \begin{equation}
    \label{eq:8}
    \dx \, \sum_{j=1}^M\sum_{k \in \interi} \modulo{\rho_{j,k}^{n+1} - \rho_{j,k}^n}
    \leq
    2 \, e^{4  \, \mathcal{V}\, T} \, \dt \Bigl(
    \mathcal{V} \,  \sum_{j=1}^M \tv (\rho^0_j) + M \, V_{\max}
    + 2 \, V_{\max} \, \NORM{\brho_o}
    \Bigr),
  \end{equation}
  with $V_{\max}$ and $\mathcal{V}$ as in~\eqref{eq:normav}.
\end{proposition}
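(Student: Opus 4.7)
The plan is to show that the one time-step map $\Psi = \Phi_S \circ \Phi_G$ defined by \eqref{eq:scheme}--\eqref{eq:scheme2} is quasi-contractive in the discrete $\L1$-norm $\dx\sum_{j,k}\modulo{\cdot}$, and then to bound the first-step increment $\dx \sum_{j,k} \modulo{\rho_{j,k}^1 - \rho_{j,k}^0}$ by hand. Writing $\rho^{n+1} = \Psi(\rho^n)$ and $\rho^n = \Psi(\rho^{n-1})$, iteration of the quasi-contraction of $\Psi$ on successive iterates yields
\[
\dx\sum_{j,k}\modulo{\rho_{j,k}^{n+1}-\rho_{j,k}^n} \leq (1+C\mathcal{V}\dt)^n \, \dx\sum_{j,k}\modulo{\rho_{j,k}^1 - \rho_{j,k}^0} \leq e^{C\mathcal{V} T} \, \dx\sum_{j,k}\modulo{\rho_{j,k}^1 - \rho_{j,k}^0},
\]
which is the source of the factor $e^{4\mathcal{V} T}$ in~\eqref{eq:8}. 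This follows the classical argument of~\cite[Lemma~3.3]{KRT2002}.

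For the quasi-contractivity, the Godunov substep $\Phi_G$ is monotone and $\L1$-contractive under~\eqref{eq:cfl}, even at the interface $x=0$, by~\cite[Lemma~4.3]{AJVG2004}. The source substep $\Phi_S$ is an explicit Euler step whose right-hand side is Lipschitz in $(\rho_{j-1,k},\rho_{j,k},\rho_{j+1,k})$ with constant proportional to $\mathcal{V}$ (since $S_{d,j}$ is built from $\bv\in\C1$). A triangle inequality then yields
\[
\dx\sum_{j,k}\modulo{\Phi_S(u)_{j,k}-\Phi_S(v)_{j,k}} \leq (1+C\mathcal{V}\dt)\,\dx\sum_{j,k}\modulo{u_{j,k}-v_{j,k}},
\]
and composition with the $\L1$-contractive $\Phi_G$ preserves this bound.

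For the first step, decompose $\rho_{j,k}^1 - \rho_{j,k}^0 = (\rho_{j,k}^{1/2} - \rho_{j,k}^0) + (\rho_{j,k}^1 - \rho_{j,k}^{1/2})$. The Godunov contribution equals $-\lambda\bigl[F_j(x_{k+1/2},\cdot,\cdot) - F_j(x_{k-1/2},\cdot,\cdot)\bigr]$: for $k\neq 0$ both interfaces sit on the same side of the origin, so the same one-sided flux ($f_{r,j}$ or $f_{\ell,j}$) is used and Lipschitz continuity of the Godunov flux bounds the telescoping sum by $2\mathcal{V}\, \tv(\rho_j^0)$; at the single interface cell $k=0$ the two evaluations use the different formulas in~\eqref{eq:fx} and give only the crude bound $2V_{\max}$, producing $2MV_{\max}$ after summing over $j$. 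For the source contribution, one estimates $\modulo{S_{d,j}(u,w)} \leq V_{\max}(u+w)$; thanks to~\eqref{eq:sghost}, to $v_{d,j}(1)=0$, and to \eqref{eq:idM}--\eqref{eq:idM1}, the source vanishes whenever one of the two lanes is inactive. Hence $\dx\sum_k\modulo{S_j(x_k,\cdot,\cdot)}$ is controlled by the $\L1$ norms on the active sides, and summing over $j$ yields a bound of the form $4V_{\max}\NORM{\brho_o}$. Collecting the two contributions and factoring $2$ gives exactly the bracket in~\eqref{eq:8}.

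The main obstacle is the quasi-contractivity of $\Phi_S$: one has to verify that $(u,w)\mapsto S_{d,j}(u,w)$ is Lipschitz with constant proportional to $\mathcal{V}$ in each of the four sign regimes of $v_{d,j+1}(w)-v_{d,j}(u)$ identified in Lemma~\ref{lem:pos}, controlling both the $v$'s and the $v'$'s through $\mathcal{V}$. A secondary difficulty is pinning down the support of the source in the infinite sum $\sum_{j,k}\modulo{S}$ so that it reduces to the finite quantity $\NORM{\brho_o}$; this depends crucially on~\eqref{eq:sghost}, \eqref{eq:sbordo}, and on the saturations $v_{d,j}(1)=0$ and $\rho_{o,j}=0$ that ensure vanishing sources on inactive strips.
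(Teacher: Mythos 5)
Your proposal follows essentially the same route as the paper: a quasi-contraction estimate for successive time increments (the paper establishes the $\L1$-contractivity of the Godunov substep directly, via the Harten-type incremental coefficients $\alpha^n_{j,k},\beta^n_{j,k}$ built on monotonicity and Lipschitz continuity of $F_j$, rather than quoting it, while the splitting/source substep yields the factor $1+4\,\mathcal{V}\,\dt\leq e^{4\mathcal{V}\dt}$), followed by an explicit bound on the first increment decomposed into transport, interface and source contributions. The one imprecision is your interface accounting: the mixed flux formula at $x_{-1/2}=0$ enters the stencils of \emph{both} cells $k=-1$ and $k=0$, so the one-sided Lipschitz/telescoping argument is valid only for $k<-1$ and $k>0$, and the crude bound $\norma{f_j}_{\L\infty}\leq V_{\max}$ must be invoked for each of the two cells adjacent to the origin (as the paper does); since you budget $2V_{\max}$ per lane for the interface anyway, the constant in~\eqref{eq:8} is unaffected. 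Finally, to replace the $\L1$-norm of $\brho^{1/2}$ by $\NORM{\brho_o}$ in the source estimate you should cite the conservation property of Lemma~\ref{lem:l1}, which is the step the paper uses there.
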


\begin{remark}\label{rem:somma}
  Observe that, by Remark~\ref{rem:resta0}, the sums appearing
  in~\eqref{eq:8} are actually sums over the \emph{active} lanes only,
  the terms corresponding to fictive lanes being equal to 0. For
  example
  \begin{displaymath}
    \sum_{j=1}^M\sum_{k \in \interi} \modulo{\rho_{j,k}^{n+1} - \rho_{j,k}^n}
    =
    \sum_{j\in\mathcal{M}_\ell}\sum_{k \leq -1} \modulo{\rho_{j,k}^{n+1} - \rho_{j,k}^n}
    +
     \sum_{j\in \mathcal{M}_r}\sum_{k \geq 0} \modulo{\rho_{j,k}^{n+1} - \rho_{j,k}^n}.
   \end{displaymath}
   However, for the sake of shortness, we keep the first notation
   throughout the proof.
\end{remark}

\begin{proof}
  Fix $k\in \interi$ and $j \in \{1,\dots,M\}$. By~\eqref{eq:scheme2}
  we have:
  \begin{align}
    \nonumber
    \rho_{j,k}^{n+1} - \rh{j,k} = \
    & \rho_{j,k}^{n+1/2} - \rho_{j,k}^{n-1/2}
    \\
    \label{eq:10}
    & +\dt \, S_{j-1}(x_k, \rho_{j-1,k}^{n+1/2}, \rho_{j,k}^{n+1/2})
      - \dt \, S_{j-1}(x_k, \rho_{j-1,k}^{n-1/2}, \rho_{j,k}^{n-1/2})
    \\
    \nonumber
    & -\dt \, \, S_{j}(x_k, \rho_{j,k}^{n+1/2}, \rho_{j+1,k}^{n+1/2})
      +\dt \, \, S_{j}(x_k, \rho_{j,k}^{n-1/2}, \rho_{j+1,k}^{n-1/2}).
  \end{align}
  Observe that, by~\eqref{eq:sbordo}, terms of type
  $ \dt \left( S_{j}(x_k, \rho_{j,k}^{n+1/2}, \rho_{j+1,k}^{n+1/2}) -
    S_{j}(x_k, \rho_{j,k}^{n-1/2}, \rho_{j+1,k}^{n-1/2})\right)$ are
  non zero for $j=1,\ldots,M-1$.  For $x\in\reali$ and
  $j=1,\dots, M-1$, the function $(u,w) \mapsto S_{j}(x, u, w)$
  defined in~\eqref{eq:4}, together with~\eqref{eq:2}
  and~\eqref{eq:sghost}, is Lipschitz in both variables, with
  Lipschitz constant
  \begin{displaymath}
    K_{j} = \max\left\{
      \norma{v'_{j} (x)}_{\L\infty ([0,1])} + v_{j+1} (x, 0), \,
      \norma{v'_{j+1} (x)}_{\L\infty ([0,1])} + v_{j} (x,0)
    \right\} \leq \mathcal{V},
  \end{displaymath}
  with $\mathcal{V}$ as in~\eqref{eq:normav}. Hence, for
  $j=1,\dots,M-1$, we get
  \begin{align*}
    & \dt \,  \modulo{ S_{j}(x_k, \rho_{j,k}^{n+1/2}, \rho_{j+1,k}^{n+1/2})
      - S_{j}(x_k, \rho_{j,k}^{n-1/2}, \rho_{j+1,k}^{n-1/2})}
    \\
    \leq\
    &
      \dt \, \mathcal{V} \, \left( \modulo{\rho_{j,k}^{n+1/2} - \rho_{j,k}^{n-1/2}}
      + \modulo{\rho_{j+1,k}^{n+1/2} - \rho_{j+1,k}^{n-1/2}}
      \right).
  \end{align*}
  By~\eqref{eq:10}, taking into account also~\eqref{eq:sbordo}, we
  conclude
  \begin{align}
    \nonumber
    & \sum_{j=1}^M \modulo{\rho_{j,k}^{n+1} - \rh{j,k}}
    \\ \nonumber
    \leq \
    & \sum_{j=1}^M  \modulo{\rho_{j,k}^{n+1/2} - \rho^{n-1/2}_{j,k}}
    \\ \nonumber
    & + 2  \, \mathcal{V} \, \dt \left(
      \modulo{\rho_{1,k}^{n+1/2} - \rho^{n-1/2}_{1,k}}
      + 2 \sum_{j=2}^{M-1} \modulo{\rho_{j,k}^{n+1/2} - \rho^{n-1/2}_{j,k}}
      +  \modulo{\rho_{M,k}^{n+1/2} - \rho^{n-1/2}_{M,k}}
      \right)
    \\ \nonumber
    \leq \
    & (1+ 4  \, \mathcal{V}\, \dt)
      \sum_{j=1}^M  \modulo{\rho_{j,k}^{n+1/2} - \rho^{n-1/2}_{j,k}}
    \\  \label{eq:9}
    \leq \
    & e^{4  \, \mathcal{V}\, \dt}
      \sum_{j=1}^M  \modulo{\rho_{j,k}^{n+1/2} - \rho^{n-1/2}_{j,k}}.
  \end{align}
  Exploit now~\eqref{eq:scheme}: we have, for fixed
  $j \in \{1,\dots,M\}$ and $k \in \interi$,
  \begin{align}
    \nonumber
    \rho_{j,k}^{n+1/2} - \rho^{n-1/2}_{j,k} = \
    & \rh{j,k} - \rho_{j,k}^{n-1}
      - \lambda
      \left[
      F_j (x_{k+1/2}, \rh{j,k}, \rh{j,k+1}) - F_j (x_{k-1/2}, \rh{j,k-1}, \rh{j,k})
      \right]
    \\
    \label{eq:12}
    & + \lambda
      \left[
      F_j (x_{k+1/2}, \rho^{n-1}_{j,k}, \rho^{n-1}_{j,k+1})
      - F_j (x_{k-1/2}, \rho^{n-1}_{j,k-1}, \rho^{n-1}_{j,k})
      \right].
  \end{align}
  We closely follow the proof
  of~\cite[Lemma~3.3]{KRT2002}. In~\eqref{eq:12} add and subtract
  $\lambda \, F_j (x_{k+1/2}, \rh{j,k}, \rho^{n-1}_{j,k+1})$ and
  $ \lambda \, F_j (x_{k-1/2}, \rh{j,k-1}, \rho^{n-1}_{j,k})$ and,
  setting
  \begin{align}
    \label{eq:alfa}
    \alpha_{j,k}^n = \
    & \left\{
      \begin{array}{l@{\quad \mbox{ if }}l}
        - \lambda \,
        \dfrac{F_j (x_{k-1/2}, \rh{j,k-1}, \rh{j,k})
        - F_j (x_{k-1/2}, \rh{j,k-1}, \rho^{n-1}_{j,k})}{\rh{j,k} - \rho^{n-1}_{j,k}}
        & \rh{j,k} \neq \rho^{n-1}_{j,k},
        \\
        0 & \rh{j,k} = \rho^{n-1}_{j,k},
      \end{array}
            \right.
    \\
    \label{eq:beta}
    \beta_{j,k}^n =  \
    & \left\{
      \begin{array}{l@{\qquad \mbox{ if }}l}
        \lambda \,
        \dfrac{F_j (x_{k+1/2}, \rh{j,k}, \rho^{n-1}_{j,k+1})
        - F_j (x_{k+1/2}, \rho^{n-1}_{j,k}, \rho^{n-1}_{j,k+1})}{\rh{j,k} - \rho^{n-1}_{j,k}}
        & \rh{j,k} \neq \rho^{n-1}_{j,k},
        \\
        0 & \rh{j,k} = \rho^{n-1}_{j,k},
      \end{array}
            \right.
  \end{align}
  rearrange the resulting expression to obtain
  \begin{equation}
    \label{eq:11}
    \begin{aligned}
      \rho_{j,k}^{n+1/2} - \rho^{n-1/2}_{j,k} = \ & \left(\rh{j,k} -
        \rho^{n-1}_{j,k}\right) \left( 1 - \alpha_{j,k}^n -
        \beta_{j,k}^n\right)
      \\
      & + \alpha_{j,k+1}^n \left(\rh{j,k+1} -
        \rho^{n-1}_{j,k+1}\right) + \beta_{j,k-1}^n \left(\rh{j,k-1} -
        \rho^{n-1}_{j,k-1}\right).
    \end{aligned}
  \end{equation}
  Since the numerical flux $F_j$ defined in~\eqref{eq:fx} is non
  decreasing in the second variable and non increasing in the third,
  we get $\alpha_{j,k}^n, \, \beta_{j,k}^n \geq 0$ for all
  $j=1,\dots,M$ and $k \in \interi$. Moreover, $F_j(x, \cdot, \cdot)$
  is Lipschitz in both arguments, for $x\in \reali$, with Lipschitz
  constant bounded by $\mathcal{V}$ as
  in~\eqref{eq:normav}. Therefore,
  \begin{align*}
    \beta_{j,k}^n = \
    & \frac{\lambda}{\rh{j,k} - \rho^{n-1}_{j,k}} \left(
      F_j (x_{k+1/2}, \rh{j,k}, \rho^{n-1}_{j,k+1})
      - F_j (x_{k+1/2}, \rho^{n-1}_{j,k}, \rho^{n-1}_{j,k+1})
      \right)
    \\
    \leq \
    & \frac{\lambda}{\rh{j,k} - \rho^{n-1}_{j,k}} \,
      \mathcal{V} \, \modulo{\rh{j,k} - \rho^{n-1}_{j,k}}
      = \lambda \, \mathcal{V} \leq \frac12,
  \end{align*}
  by the CFL condition~\eqref{eq:cfl}. A similar argument applies to
  $\alpha^n_{j,k}$. As a consequence,
  $1 - \alpha^n_{j,k} - \beta^n_{j,k} \geq 0$, thus all the
  coefficients appearing in~\eqref{eq:11} are positive and so
  \begin{align}
    \nonumber
    \sum_{k\in \interi} \modulo{\rho_{j,k}^{n+1/2} - \rho^{n-1/2}_{j,k}} \leq \
    & \sum_{k\in \interi} \modulo{\rh{j,k} - \rho^{n-1}_{j,k}}
      \left( 1 - \alpha_{j,k}^n - \beta_{j,k}^n\right)
    \\ \nonumber
    & + \sum_{k\in \interi} \alpha_{j,k+1}^n \, \modulo{\rh{j,k+1} - \rho^{n-1}_{j,k+1}}
      + \sum_{k\in \interi} \beta_{j,k-1}^n \,  \modulo{\rh{j,k-1} - \rho^{n-1}_{j,k-1}}
    \\ \label{eq:13}
    = \
    & \sum_{k\in \interi} \modulo{\rh{j,k} - \rho^{n-1}_{j,k}}.
  \end{align}
  Collecting together~\eqref{eq:9} and~\eqref{eq:13} leads to
  \begin{displaymath}
    \sum_{j=1}^M  \sum_{k\in \interi}  \modulo{\rho_{j,k}^{n+1} - \rh{j,k}}
    \leq
    e^{4 \, \mathcal{V}\, \dt }  \sum_{j=1}^M  \sum_{k\in \interi}
    \modulo{\rho_{j,k}^{n+1/2} - \rho^{n-1/2}_{j,k}}
    \leq
    e^{4  \, \mathcal{V}\, \dt}  \sum_{j=1}^M  \sum_{k\in \interi}
    \modulo{\rh{j,k} - \rho^{n-1}_{j,k}},
  \end{displaymath}
  which applied recursively yields
  \begin{equation}
    \label{eq:14}
    \dx \sum_{j=1}^M  \sum_{k\in \interi}  \modulo{\rho_{j,k}^{n+1} - \rh{j,k}}
    \leq
    e^{4\, \mathcal{V} \, T }
    \dx \sum_{j=1}^M  \sum_{k\in \interi}  \modulo{\rho_{j,k}^{1} - \rho^0_{j,k}},
  \end{equation}
  where we also multiplied both sides of the inequality by $\dx$.

  Using~\eqref{eq:scheme} and~\eqref{eq:scheme2}, compute
  \begin{align}
    \nonumber
    \rho^{1}_{j,k} - \rho^0_{j,k} =  \
    & \rho^{1/2}_{j,k} - \rho^0_{j,k}
      + \dt \, S_{j-1} (x_{k}, \rho^{1/2}_{j-1,k}, \rho^{1/2}_{j,k})
      - \dt \, S_j (x_k, \rho^{1/2}_{j,k}, \rho^{1/2}_{j+1,k})
    \\ \label{eq:A}
    = \
    & -\lambda \, \left[
      F_j (x_{k+1/2}, \rho^0_{j,k}, \rho^0_{j,k+1})
      - F_j (x_{k-1/2}, \rho^0_{j,k-1}, \rho^0_{j,k})
      \right]
    \\ \label{eq:B}
    & + \dt \, S_{j-1} (x_{k}, \rho^{1/2}_{j-1,k}, \rho^{1/2}_{j,k})
      - \dt \, S_j (x_k, \rho^{1/2}_{j,k}, \rho^{1/2}_{j+1,k}).
  \end{align}
  Focus first on~\eqref{eq:B}: by the definition of
  $S_j$~\eqref{eq:2}--\eqref{eq:4}--\eqref{eq:sbordo}, for
  $j=1,\dots,M-1$ we have
  \begin{equation}
    \label{eq:boundS}
    \modulo{S_j (x_k, \rho^{1/2}_{j,k}, \rho^{1/2}_{j+1,k})}
    \leq
    V_{\max} \left( \rho^{1/2}_{j,k} +  \rho^{1/2}_{j+1,k}\right).
  \end{equation}
  Therefore, recalling Remark~\ref{rem:somma}, with slight abuse of notation
  \begin{align}
    \nonumber
    \dx \sum_{j=1}^M\sum_{k \in \interi} \dt
    \modulo{S_{j-1} (x_{k}, \rho^{1/2}_{j-1,k}, \rho^{1/2}_{j,k})
    - S_j (x_k, \rho^{1/2}_{j,k}, \rho^{1/2}_{j+1,k})}
    \leq \
    & \dx \, \dt \, V_{\max} \sum_{j=1}^M\sum_{k \in \interi}
      4 \, \rho^{1/2}_{j,k}
    \\ \nonumber
    = \
    & 4 \, \dt \, V_{\max} \, \NORM{\rho^{1/2}} 
    \\
    \label{eq:15}
    = \
    & 4 \, \dt \, V_{\max} \, \NORM{\brho_o},
  \end{align}
  where we use Lemma~\ref{lem:l1}.

  Pass now to~\eqref{eq:A}. Since we are interested in the sum over
  $k \in \interi$, we distinguish among four cases: $k<-1$, $k>0$,
  $k=-1$ and $k=0$.

  The first case, $k<-1$, amounts to $x_{k-1/2}<x_{k+1/2}<0$. Thus, by
  the definition of $F_j$~\eqref{eq:fx}, together with~\eqref{eq:3},
  the numerical flux does not depend on the variable $x$, namely
  \begin{displaymath}
    \mbox{for } x<0: \quad
    F_j (x, u, w) =
    \min\bigl\{f_{\ell,j} \bigl(\min\{u, \theta_\ell^j\}\bigr),
    f_{\ell,j} \left(\max\{w, \theta_\ell^j\}\right) \bigr\},
  \end{displaymath}
  and the function above is clearly Lipschitz in both $u$ and $w$,
  with Lipschitz constant $\mathcal{V}$ as in~\eqref{eq:normav},
  leading to
  \begin{equation}
    \label{eq:xneg}
    \begin{aligned}
      & \sum_{k < -1} \modulo{ F_j (x_{k+1/2}, \rho^0_{j,k},
        \rho^0_{j,k+1}) - F_j (x_{k-1/2}, \rho^0_{j,k-1},
        \rho^0_{j,k})}
      \\
      \leq \ & \mathcal{V} \sum_{k<-1} \left(\modulo{\rho^0_{j,k} -
          \rho^0_{j,k-1}} + \modulo{\rho^0_{j,k+1} - \rho^0_{j,k}}
      \right).
    \end{aligned}
  \end{equation}
  The case $k>0$ can be treated analogously, leading to
  \begin{equation}
    \label{eq:xpos}
    \begin{aligned}
      & \sum_{k >0} \modulo{ F_j (x_{k+1/2}, \rho^0_{j,k},
        \rho^0_{j,k+1}) - F_j (x_{k-1/2}, \rho^0_{j,k-1},
        \rho^0_{j,k})}
      \\
      \leq \ & \mathcal{V} \, \sum_{k >0} \left(\modulo{\rho^0_{j,k} -
          \rho^0_{j,k-1}} + \modulo{\rho^0_{j,k+1} - \rho^0_{j,k}}
      \right).
    \end{aligned}
  \end{equation}
  Pass now to $k=0$. Recall that $x_{-1/2}=0$. By the definition of
  $F_j$~\eqref{eq:fx}, together with~\eqref{eq:3}, we have
  \begin{align*}
    F_j (x_{1/2}, \rho^0_{j,0},  \rho^0_{j,1})
    - F_j (x_{-1/2}, \rho^0_{j,-1}, \rho^0_{j,0})
    = \
    & \min\bigl\{f_{r,j} \left(\min\{\rho^0_{j,0}, \theta_r^j\}\right),
      f_{r,j} \left(\max\{\rho^0_{j,1}, \theta_r^j\}\right) \bigr\}
    \\
    & - \min\bigl\{f_{\ell,j} \left(\min\{\rho^0_{j,-1}, \theta_\ell^j\}\right),
      f_{r,j} \left(\max\{\rho^0_{j,0},  \theta_r^j\}\right) \bigr\}.
  \end{align*}
  We immediately get
  \begin{equation}
    \label{eq:x0}
    \modulo{F_j (x_{1/2}, \rho^0_{j,0},  \rho^0_{j,1})\!
      - \! F_j (x_{-1/2}, \rho^0_{j,-1}, \rho^0_{j,0})} \leq
    \norma{f_j}_{\L\infty} \leq V_{\max},
  \end{equation}
  with $V_{\max}$ as in~\eqref{eq:normav}. The case $k=-1$ follows
  analogously.

  Hence, collecting together~\eqref{eq:xneg}, \eqref{eq:xpos}
  and~\eqref{eq:x0} and using the fact that $\lambda \, \dx = \dt$, we
  obtain
  \begin{align}
    \nonumber
    &  \dx\sum_{k \in \interi}  \lambda \,
      \modulo{ F_j (x_{k+1/2}, \rho^0_{j,k},  \rho^0_{j,k+1})
      - F_j (x_{k-1/2}, \rho^0_{j,k-1},  \rho^0_{j,k})}
    \\
    \nonumber
    \leq \
    & \mathcal{V} \, \dt  \sum_{k \in \interi} \left(\modulo{\rho^0_{j,k} - \rho^0_{j,k-1}}
      + \modulo{\rho^0_{j,k+1} - \rho^0_{j,k}}
      \right) + 2 \, \dt \, V_{\max}
    \\
    \label{eq:16}
    \leq \
    & 2 \, \mathcal{V} \, \dt  \sum_{k \in \interi}\modulo{\rho^0_{j,k} - \rho^0_{j,k-1}}
      +2 \, \dt \, V_{\max}.
  \end{align}
  By~\eqref{eq:A}--\eqref{eq:B}, insert~\eqref{eq:15}
  and~\eqref{eq:16} into~\eqref{eq:14}:
  \begin{align*}
    \dx \sum_{j=1}^M  \sum_{k \in \interi}
    \modulo{\rho^{n+1}_{j,k} - \rho^n_{j,k}}
    \leq \
    & 2 \, e^{4 \, \mathcal{V} \, T} \, \dt \Bigl(
      \mathcal{V} \,  \sum_{j=1}^M \tv (\rho^0_j) + M \, V_{\max}
      + 2 \, V_{\max} \, \NORM{\brho_o}
      \Bigr),
  \end{align*}
  concluding the proof.
\end{proof}

\subsection{Spatial \texorpdfstring{$\BV$}{BV} bound }
\label{sec:bv}

We follow the idea of~\cite[Lemma~4.2]{BGKT2008} of providing a
\emph{local} spatial $\BV$ bound, in the sense that the estimate
in~\eqref{eq:bv} below blows up if one of the endpoints of the
interval $[a,b]$ approaches $x=0$.

\begin{lemma}
  \label{lem:bv}
  Let $\brho_o \in \BV (\reali; [0,1]^M)$ with
  $\NORM{\brho_o}< +\infty$. Assume that the CFL
  condition~\eqref{eq:cfl} holds. For any interval
  $[a,b] \subseteq \reali$ such that $0 \notin [a,b] $, fix $s>0$ such
  that $2 \, s < \min\{\modulo{a}, \modulo{b}\}$ and $s>\dx$. Then,
  for any $n=1, \ldots, N_T-1$
  the following estimate holds:
  \begin{equation}
    \label{eq:bv}
    \sum_{j=1}^M \sum_{k \in \mathbf{K}_a^b} \modulo{\rh{j,k+1} - \rh{j,k}}
    \leq e^{4  \, \mathcal{V} \, T} \biggl( \sum_{j=1}^M \tv (\rho_{o,j})
    + 8\, M \, V_{\max}\, T + \frac{2 \, C}{s}\biggr),
  \end{equation}
  with
  $\mathbf{K}_a^b = \left\{ k \in \interi \colon a \leq x_{k} \leq
    b\right\}$, $V_{\max}$ and $\mathcal{V}$ as in~\eqref{eq:normav}
  and $C$ independent of $\dx$ and $\dt$.
\end{lemma}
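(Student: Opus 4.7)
The plan is to adapt the localization strategy of \cite[Lemma~4.2]{BGKT2008}: since no global spatial BV bound can hold in the presence of the flux discontinuity at $x=0$, I introduce a spatial cutoff that vanishes near $x=0$ and equals $1$ on $[a,b]$. Concretely, choose a continuous piecewise linear $\omega:\reali\to[0,1]$ with $\omega\equiv 0$ on $[-s,s]$, $\omega\equiv 1$ outside $[-2s,2s]$, and $\norma{\omega'}_{\L\infty}\leq 1/s$; set $\omega_{k+1/2}:=\omega(x_{k+1/2})$. The assumption $2s<\min\{\modulo{a},\modulo{b}\}$ forces $\omega_{k+1/2}=1$ on $\mathbf{K}_a^b$, while $s>\dx$ ensures both that $\omega_{-1/2}=0$, so the flux discontinuity at $x_{-1/2}=0$ contributes nothing, and that the total weight variation $\sum_k\modulo{\omega_{k+3/2}-\omega_{k+1/2}}\leq 2$. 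I would then control the weighted discrete variation
\begin{equation*}
  \Phi^n := \sum_{j=1}^M \sum_{k\in\interi} \omega_{k+1/2}\,\modulo{\rh{j,k+1} - \rh{j,k}},
\end{equation*}
from which \eqref{eq:bv} follows by restricting the outer sum to $\mathbf{K}_a^b$ and using $\Phi^0\leq \sum_j \tv(\rho_{o,j})$.

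For the Godunov half step \eqref{eq:scheme}, at every interface other than $x_{-1/2}=0$ the numerical flux $F_j(x,\cdot,\cdot)$ coincides with the standard Godunov flux associated to one of the smooth functions $f_{r,j}$ or $f_{\ell,j}$; hence $\rho^{n+1/2}_{j,k+1}-\rho^{n+1/2}_{j,k}$ admits a convex combination representation in terms of $\rh{j,k'+1}-\rh{j,k'}$ for $k'\in\{k-1,k,k+1\}$, with coefficients $\alpha,\beta\in[0,1/2]$ constructed exactly as in \eqref{eq:alfa}--\eqref{eq:beta}. Multiplying by $\omega_{k+1/2}$, summing in $j,k$ and performing a discrete summation by parts to transfer the weight onto the spatial differences, one obtains
\begin{equation*}
  \sum_{j,k}\omega_{k+1/2}\,\modulo{\rho^{n+1/2}_{j,k+1}-\rho^{n+1/2}_{j,k}} \leq \sum_{j,k}\omega_{k+1/2}\,\modulo{\rh{j,k+1}-\rh{j,k}} + R^n,
\end{equation*}
where the remainder $R^n$ collects: (i) terms weighted by $\modulo{\omega_{k+3/2}-\omega_{k+1/2}}\leq \dx/s$ and bounded via $\mathcal{V}$ times solution values, summable through Lemma~\ref{lem:l1} to a quantity of the form $C'\,\dt/s$ with $C'$ depending only on $V_{\max}$ and $\NORM{\brho_o}$; and (ii) a single interface contribution at $x_{-1/2}=0$, which vanishes because $\omega_{-1/2}=0$.

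For the source step \eqref{eq:scheme2} I would mirror the derivation of \eqref{eq:9}: Lipschitz continuity of each $S_j$ with constant $\mathcal{V}$ yields the multiplicative factor $(1+4\mathcal{V}\dt)\leq e^{4\mathcal{V}\dt}$ in front of the weighted Godunov increment, whereas the fact that $S_j$ does not telescope across cells where $\omega$ varies contributes $\norma{S_j}_{\L\infty}\leq 2V_{\max}$ times the uniformly bounded number of interfaces inside the two transition zones of $\omega$, summed over the $M$ lanes, for an additive term of at most $8MV_{\max}\dt$. Combining the two half steps and iterating via discrete Gronwall yields $\Phi^n\leq e^{4\mathcal{V}T}\bigl(\Phi^0 + 8MV_{\max}T + 2C/s\bigr)$ for some $C$ independent of $\dx,\dt$, which is precisely \eqref{eq:bv}.

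The main obstacle is the careful bookkeeping of the Godunov step, specifically checking that the summation by parts rearranges the weight so that every term involving the singular interface $x_{-1/2}=0$ acquires the factor $\omega_{-1/2}=0$ and is thus suppressed, and simultaneously that the remaining weight-variation terms scale like $1/s$ rather than $1/s^2$. This hinges essentially on the global bound $\sum_k\modulo{\omega_{k+3/2}-\omega_{k+1/2}}\leq 2$ coming from the total variation of $\omega$, combined with the $\L1$ control of Lemma~\ref{lem:l1} used to bound the solution across the transition zone; the condition $s>\dx$ is what guarantees that the transition region is properly resolved by the mesh, so that the weight derivative is indeed captured by the telescoping sum.
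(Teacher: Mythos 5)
There is a genuine gap, and it sits exactly at the point you flag as ``the main obstacle''. In the Godunov half step, after the summation by parts the weight-variation remainder is of the form $\sum_k\bigl[\gamma_{k}\,\modulo{\omega_{k-1/2}-\omega_{k+1/2}}+\delta_{k}\,\modulo{\omega_{k+3/2}-\omega_{k+1/2}}\bigr]\,\modulo{\rho^n_{j,k+1}-\rho^n_{j,k}}$ with incremental coefficients $\gamma_k,\delta_k\le\lambda\,\mathcal V\le 1/2$. Your claim that Lemma~\ref{lem:l1} makes this of size $C'\,\dt/s$ does not hold: bounding $\modulo{\rho^n_{j,k+1}-\rho^n_{j,k}}\le\rho^n_{j,k}+\rho^n_{j,k+1}$ and using the $\L1$ bound gives $\sum_k\rho^n_{j,k}\le\NORM{\brho_o}/\dx$, so the remainder is at best of order $\lambda\,\mathcal V\,\NORM{\brho_o}/s$, i.e.\ $O(\dt/(s\,\dx))$; alternatively, using $\modulo{\rho^n_{j,k+1}-\rho^n_{j,k}}\le 1$ together with $\sum_k\modulo{\omega_{k+3/2}-\omega_{k+1/2}}\le 2$ gives a remainder of order $1$ per time step. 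Either way the per-step error is \emph{not} $O(\dt)$, and after summing over the $N_T\approx T/\dt$ steps the weighted estimate blows up as the mesh is refined. The same problem affects your accounting of the source step: the number of interfaces in the transition zones of $\omega$ is of order $s/\dx$, not uniformly bounded, so the claimed $8\,M\,V_{\max}\,\dt$ per step is not justified by that mechanism (in fact the source step needs no weight correction at all, being local in $k$; in the paper the $8\,M\,V_{\max}\,T$ comes from the source evaluated at the two boundary cells of the localized interval).

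The missing ingredient is Proposition~\ref{prop:l1cont-time}, which your argument never uses and which is what actually controls the variation leaking out of the region near $x=0$. The paper's proof restricts the accumulated-in-time estimate $\dx\sum_{n}\sum_{j}\sum_k\modulo{\rho^{n+1}_{j,k}-\rho^n_{j,k}}\le C$ to the buffer zones $[a-s-\dx,a]$ and $[b,b+s+\dx]$ (each containing at least $s/\dx$ cells), and by a pigeonhole/minimum argument selects cells $k_a$, $k_b$ at which the total time variation over all time steps is at most $C/s$; these time differences are precisely the boundary terms appearing in the localized Harten-type recursion for $\sum_{k=k_a}^{k_b}\modulo{\rho^n_{j,k+1}-\rho^n_{j,k}}$, while the interior part is handled by the standard TVD structure away from $x=0$ and the source contributes $8\,M\,V_{\max}\,\dt$ per step at the two boundary cells. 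A purely spatial cutoff-weight estimate, computed step by step, cannot see this accumulated-in-time control, so without bringing in the $\L1$ time-continuity (at which point you would essentially be reproducing the paper's argument) your proposal does not yield the stated $2C/s$ term.
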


\begin{proof}
  Let
  \begin{align*}
    \mathcal{A}_\Delta = \
    & \left\{k \in \interi \colon x_{k-1/2} \in [a-s-\dx, a]\right\},
    &
      \mathcal{B}_\Delta = \
    & \left\{k \in \interi \colon x_{k+1/2} \in [b, b+s+\dx]\right\}.
  \end{align*}
  By the assumptions on $s$, observe that there are at least 2
  elements in each of the sets above,
  i.e.~$\modulo{\mathcal{A}_\Delta}, \, \modulo{\mathcal{B}_\Delta}
  \geq 2$. Moreover, $\modulo{\mathcal{A}_\Delta} \, \dx \geq s$ and
  $\modulo{\mathcal{B}_\Delta} \dx \geq s$. Furthermore, notice that
  \begin{itemize}
  \item if $0<a<b$: it holds $x_{k-1/2}>0$ for any
    $k\in\mathcal{A}_\Delta$;

  \item if $a<b<0$: it holds $x_{k+1/2}<0$ for any
    $k \in \mathcal{B}_\Delta$.
  \end{itemize}
  By Proposition~\ref{prop:l1cont-time}, there exists a constant $C$
  such that
  \begin{displaymath}
    \dx \sum_{n=0}^{N_T -1} \sum_{j=1}^M \, \sum_{k \in \interi}
    \modulo{\rho^{n+1}_{j,k} - \rh{j,k}} \leq C,
  \end{displaymath}
  with
  $C = 2 \, T \, e^{4 \, \mathcal{V} \, T} \left( \mathcal{V} \, \tv
    (\brho_o) + M \, V_{\max} + 2 V_{\max} \,
    \NORM{\brho_o}\right)$. Hence, when restricting the sum over $k$
  in the set $\mathcal{A}_\Delta$, respectively $\mathcal{B}_\Delta$,
  it clearly follows that
  \begin{align}
    \label{eq:19}
    \dx \sum_{n=0}^{N_T -1} \sum_{j=1}^M \, \sum_{k \in \mathcal{A}_\Delta}
    \modulo{\rho^{n+1}_{j,k} - \rh{j,k}} \leq \
    & C,
    &
      \dx \sum_{n=0}^{N_T -1} \sum_{j=1}^M \, \sum_{k \in \mathcal{B}_\Delta}
      \modulo{\rho^{n+1}_{j,k} - \rh{j,k}} \leq \
    & C.
  \end{align}
  Choose $k_a \in \mathcal{A}_\Delta$ and $k_b$ with
  $k_b+1 \in \mathcal{B}_\Delta$ such that
  \begin{align*}
    \sum_{n=0}^{N_T -1} \sum_{j=1}^M
    \modulo{\rho^{n+1}_{j,k_a} - \rh{j,k_a}}
    = \
    & \min_{k \in \mathcal{A}_\Delta}
      \sum_{n=0}^{N_T -1} \sum_{j=1}^M
      \modulo{\rho^{n+1}_{j,k} - \rh{j,k}},
    \\
    \sum_{n=0}^{N_T -1} \sum_{j=1}^M
    \modulo{\rho^{n+1}_{j,k_b+1} - \rh{j,k_b+1}}
    = \
    & \min_{k \in \mathcal{B}_\Delta}
      \sum_{n=0}^{N_T -1} \sum_{j=1}^M
      \modulo{\rho^{n+1}_{j,k} - \rh{j,k}},.
  \end{align*}
  Thus, by~\eqref{eq:19},
  \begin{equation}
    \label{eq:20}
    \begin{aligned}
      \sum_{n=0}^{N_T -1} \sum_{j=1}^M \modulo{\rho^{n+1}_{j,k_a} -
        \rh{j,k_a}} \leq \ & \frac{C}{\modulo{\mathcal{A}_\Delta} \,
        \dx} \leq \frac{C}{s},
      \\
      \sum_{n=0}^{N_T -1} \sum_{j=1}^M \modulo{\rho^{n+1}_{j,k_b+1} -
        \rh{j,k_b+1}} \leq \ & \frac{C}{\modulo{\mathcal{B}_\Delta} \,
        \dx} \leq \frac{C}{s}.
    \end{aligned}
  \end{equation}
  In view of the next steps, observe that
  \begin{equation}
    \label{eq:decomp}
    \sum_{k=k_a}^{k_b}\modulo{\rho^{n+1}_{j,k+1} - \rho^{n+1}_{j,k}}
    =
    \modulo{\rho^{n+1}_{j,k_a+1} - \rho^{n+1}_{j,k_a}}
    + \sum_{k=k_a+1}^{k_b-1}\modulo{\rho^{n+1}_{j,k+1} - \rho^{n+1}_{j,k}}
    + \modulo{\rho^{n+1}_{j,k_b+1} - \rho^{n+1}_{j,k_b}}.
  \end{equation}
  Focus on the central sum on the right hand side
  of~\eqref{eq:decomp}. By~\eqref{eq:scheme2}, for $k_a < k < k_b$ and
  $j=1,\dots,M$, we have
  \begin{align*}
    \rho^{n+1}_{j,k+1} - \rho^{n+1}_{j,k}
    = \
    &  \rho^{n+1/2}_{j,k+1} - \rho^{n+1/2}_{j,k}
    \\
    & + \dt \left(
      S_{j-1} (x_k,  \rho^{n+1}_{j-1,k+1} - \rho^{n+1}_{j,k+1})
      - S_{j-1} (x_k,  \rho^{n+1}_{j-1,k} - \rho^{n+1}_{j,k})\right.
    \\
    & \qquad\quad \left.
      - S_{j} (x_k,  \rho^{n+1}_{j,k+1} - \rho^{n+1}_{j+1,k+1})
      + S_{j} (x_k,  \rho^{n+1}_{j,k} - \rho^{n+1}_{j+1,k})
      \right).
  \end{align*}
  By the Lipschitz continuity of the map $(u,w) \mapsto S_j (x,u,w)$
  for $x\in \reali$ and $j=1,\dots, M-1$, we get
  \begin{align}
    \label{eq:17}
    \sum_{j=1}^M\modulo{ \rho^{n+1}_{j,k+1} - \rho^{n+1}_{j,k}}
    \leq \
    & (1 + 4 \, \mathcal{V} \,  \dt ) \sum_{j=1}^M \modulo{ \rho^{n+1/2}_{j,k+1} - \rho^{n+1/2}_{j,k}}
      .
  \end{align}
  Fix now $j\in \{1,\dots,M\}$. Recall that for all
  $k_a \leq k \leq k_b$ either $x_{k-1/2}>0$ or
  $x_{k+1/2}<0$. Therefore, when applying~\eqref{eq:scheme}, observe
  that the numerical flux $F_j$~\eqref{eq:fx} is never computed at
  $x=0$, leading to
  \begin{equation}
    \label{eq:schemeG}
    \rho^{n+1/2}_{j,k} = \rh{j,k} - \lambda \left[
      G_{d,j} (\rh{j,k}, \rh{j,k+1}) - G_{d,j} (\rh{j,k-1}, \rh{j,k})
    \right],
  \end{equation}
  for $d=\ell, r$, with
  \begin{equation}
    \label{eq:Gdj}
    G_{d,j} (u,w) = \min\bigl\{
    f_{d,j} \left(\min\{u, \theta_d^j\}\right), \, f_{d,j}\left(\max\{w, \theta_d^j\}\right)
    \bigr\}.
  \end{equation}
  Clearly, it is $d=\ell$ whenever $a<b<0$ and $d=r$ whenever $0<a<b$.
  Adding and subtracting
  $\lambda \, G_{d,j} (\rh{j,k}, \rh{j,k}) = \lambda \, f_{d,j}
  (\rh{j,k})$ into \eqref{eq:schemeG} and setting
  \begin{align}
    \label{eq:gamma}
    \gamma_{d,j,k}^n = \
    & \left\{
      \begin{array}{l@{\quad \mbox{ if }}l}
        -\lambda \,
        \dfrac{G_{d,j} (\rh{j,k+1},\rh{j,k}) - G_{d,j} (\rh{j,k}, \rh{j,k})}{\rh{j,k+1}- \rh{j,k}}
        & \rh{j,k+1} \neq \rh{j,k},
        \\
        0
        & \rh{j,k+1} = \rh{j,k},
      \end{array}
          \right.
    \\
    \label{eq:delta}
    \delta_{d,j,k}^n = \
    & \left\{
      \begin{array}{l@{\qquad \mbox{ if }}l}
        \lambda \,
        \dfrac{G_{d,j} (\rh{j,k},\rh{j,k}) - G_{d,j} (\rh{j,k-1}, \rh{j,k})}{\rh{j,k}- \rh{j,k-1}}
        & \rh{j,k} \neq \rh{j,k-1},
        \\
        0
        & \rh{j,k} = \rh{j,k-1},
      \end{array}
          \right.
  \end{align}
  we can rearrange~\eqref{eq:schemeG} to get
  \begin{equation}
    \label{eq:scomp}
    \rho^{n+1/2}_{j,k} = \rh{j,k} + \gamma_{d,j,k}^n \left(\rh{j,k+1} - \rh{j,k}\right)
    - \delta_{d,j,k}^n \left(\rh{j,k} - \rh{j,k-1}\right).
  \end{equation}
  The function $G_{d,j}$ is non decreasing in the first argument and
  non increasing in the second, so that we easily get
  $\gamma_{d,j,k}^n , \, \delta_{d,j,k}^n \geq 0$. Furthermore,
  $G_{d,j}$ is Lipschitz continuous in both variables, with the same
  Lipschitz constant $\mathcal{V}$~\eqref{eq:normav} as $F_j$: by the
  CFL condition~\eqref{eq:cfl}
  \begin{align*}
    \gamma_{d,j,k}^n\leq \
    & \lambda \, \mathcal{V} \leq \frac12,
    &
      \delta_{d,j,k}^n\leq \
    & \lambda \, \mathcal{V} \leq \frac12,
  \end{align*}
  and hence $\gamma_{d,j,k}^n + \delta_{d,j,k+1}^n\leq 1$.  Therefore,
  for $k_a< k < k_b$
  \begin{equation}
    \label{eq:21}
    \begin{aligned}
      \rho^{n+1/2}_{j,k+1} - \rho^{n+1/2}_{j,k} = \ & \left(\rh{j,k+1}
        - \rh{j,k}\right) \left(1 - \gamma_{d,j,k}^n -
        \delta_{d,j,k+1}^n\right)
      \\
      & + \gamma_{d,j, k+1}^n \left(\rh{j,k+2} - \rh{j,k+1}\right) +
      \delta_{d,j,k}^n \left(\rh{j,k} - \rh{j,k-1}\right).
    \end{aligned}
  \end{equation}
  We are left with the boundary terms in~\eqref{eq:decomp}. Fix
  $j \in \{1,\dots, M\}$. For $k=k_a$, applying
  first~\eqref{eq:scheme2} then~\eqref{eq:scheme}, in the form
  of~\eqref{eq:scomp}, we have
  \begin{align*}
    & \rho^{n+1}_{j,k_a+1} - \rho^{n+1}_{j,k_a}
    \\
    = \
    &  \rho^{n+1/2}_{j,k_a+1}
      + \dt \, S_{j-1} (x_{k_a+1}, \rho^{n+1/2}_{j-1,k_a+1}, \rho^{n+1/2}_{j,k_a+1})
      - \dt \, S_{j} (x_{k_a+1}, \rho^{n+1/2}_{j,k_a+1}, \rho^{n+1/2}_{j+1,k_a+1})
      -  \rho^{n+1}_{j,k_a}
    \\
    = \
    & \rh{j,k_a+1} + \gamma^n_{d,j,k_a+1} \left(\rh{j,k_a+2} - \rh{j,k_a+1}\right)
      - \delta^n_{d,j,k_a+1}\left(\rh{j,k_a+1} - \rh{j,k_a}\right)
    \\
    & + \dt \, S_{j-1} (x_{k_a+1}, \rho^{n+1/2}_{j-1,k_a+1}, \rho^{n+1/2}_{j,k_a+1})
      - \dt \, S_{j} (x_{k_a+1}, \rho^{n+1/2}_{j,k_a+1}, \rho^{n+1/2}_{j+1,k_a+1})
      -  \rho^{n+1}_{j,k_a}.
  \end{align*}
  Add and subtract $\rh{j,k_a}$, then take the absolute value and sum
  over $j=1,\dots,M$: exploiting~\eqref{eq:boundS} leads to
  \begin{equation}
    \label{eq:ka}
    \begin{aligned}
      \sum_{j=1}^M \modulo{ \rho^{n+1}_{j,k_a+1} - \rho^{n+1}_{j,k_a}
      } \leq \ & \sum_{j=1}^M \modulo{\rho^{n+1}_{j,k_a} -\ \rh{j,k_a}
      } + \sum_{j=1}^M (1-\delta^n_{d,j,k_a+1}) \modulo{\rh{j,k_a+1}
        -\rh{j,k_a}}
      \\
      & + \sum_{j=1}^M \gamma^n_{d,j,k_a+1} \modulo{\rh{j,k_a+2} -
        \rh{j,k_a+1}} + 4 \, \dt \, V_{\max} \sum_{j=1}^M
      \rho^{n+1/2}_{j,k_a+1}.
    \end{aligned}
  \end{equation}
  Proceed similarly for $k=k_b$:
  \begin{align*}
    & \rho^{n+1}_{j,k_b+1} - \rho^{n+1}_{j,k_b}
    \\
    = \
    &  \rho^{n+1}_{j,k_b+1} - \rho^{n+1/2}_{j,k_b}
      - \dt \, S_{j-1} (x_{k_b}, \rho^{n+1/2}_{j-1,k_b}, \rho^{n+1/2}_{j,k_b})
      + \dt \, S_{j} (x_{k_b}, \rho^{n+1/2}_{j,k_b}, \rho^{n+1/2}_{j+1,k_b})
    \\
    = \
    & \rho^{n+1}_{j,k_b+1} - \rh{j,k_b}
      - \gamma^n_{d,j,k_b} \left(\rh{j,k_b+1} - \rh{j,k_b}\right)
      + \delta^n_{d,j,k_b}\left(\rh{j,k_b} - \rh{j,k_b-1}\right)
    \\
    &   - \dt \, S_{j-1} (x_{k_b}, \rho^{n+1/2}_{j-1,k_b}, \rho^{n+1/2}_{j,k_b})
      + \dt \, S_{j} (x_{k_b}, \rho^{n+1/2}_{j,k_b}, \rho^{n+1/2}_{j+1,k_b}).
  \end{align*}
  Now add and subtract $\rh{j,k_b+1}$, take the absolute value and sum
  over $j=1,\dots,M$:
  \begin{equation}
    \label{eq:kb}
    \begin{aligned}
      \sum_{j=1}^M \modulo{ \rho^{n+1}_{j,k_b+1} - \rho^{n+1}_{j,k_b}
      } \leq \ & \sum_{j=1}^M \modulo{\rho^{n+1}_{j,k_b+1} -\
        \rh{j,k_b+1} } + \sum_{j=1}^M (1-\gamma^n_{d,j,k_b})
      \modulo{\rh{j,k_b+1} -\rh{j,k_b}}
      \\
      & + \sum_{j=1}^M \delta^n_{d,j,k_b} \modulo{\rh{j,k_b+1} -
        \rh{j,k_b}} + 4 \, \dt \, V_{\max} \sum_{j=1}^M
      \rho^{n+1/2}_{j,k_b}.
    \end{aligned}
  \end{equation}

  By~\eqref{eq:decomp}, collect together~\eqref{eq:17}, \eqref{eq:21},
  \eqref{eq:ka} and~\eqref{eq:kb}: since all the coefficients
  appearing there are positive, we obtain
  \begin{align*}
    &  \sum_{j=1}^M \, \sum_{k=k_a}^{k_b}
      \modulo{\rho^{n+1}_{j,k+1} - \rho^{n+1}_{j,k}}
    \\
    \leq \
    & \sum_{j=1}^M \modulo{\rho^{n+1}_{j,k_a} -\ \rh{j,k_a} }
      + \sum_{j=1}^M (1-\delta^n_{d,j,k_a+1}) \modulo{\rh{j,k_a+1} -\rh{j,k_a}}
      + \sum_{j=1}^M \gamma^n_{d,j,k_a+1} \modulo{\rh{j,k_a+2} - \rh{j,k_a+1}}
    \\
    & + 4 \, \dt \, V_{\max} \sum_{j=1}^M \rho^{n+1/2}_{j,k_a+1}
      + e^{4\, \mathcal{V}  \, \dt } \sum_{j=1}^M \, \sum_{k=k_a+1}^{k_b-1}
      \left( 1 - \gamma_{d,j,k}^n -  \delta_{d,j,k+1}^n  \right)
      \modulo{\rh{j,k+1} - \rh{j,k}}
    \\
    & + e^{4 \, \mathcal{V}\, \dt } \sum_{j=1}^M \, \sum_{k=k_a+1}^{k_b-1}
      \gamma_{d,j,k+1}^n  \modulo{\rh{j,k+2} - \rh{j,k+1}}
      + e^{4 \, \mathcal{V} \, \dt} \sum_{j=1}^M \, \sum_{k=k_a+1}^{k_b-1}
      \delta_{d,j,k}^n   \modulo{\rh{j,k} - \rh{j,k-1}}
    \\
    & +  \sum_{j=1}^M \modulo{\rho^{n+1}_{j,k_b+1} -\ \rh{j,k_b+1} }
      + \sum_{j=1}^M (1-\gamma^n_{d,j,k_b}) \modulo{\rh{j,k_b+1} -\rh{j,k_b}}
      + \sum_{j=1}^M \delta^n_{d,j,k_b} \modulo{\rh{j,k_b+1} - \rh{j,k_b}}
    \\
    & + 4 \, \dt \, V_{\max} \sum_{j=1}^M \rho^{n+1/2}_{j,k_b}
    \\
    \leq \
    &  \sum_{j=1}^M \left( e^{4 \mathcal{V} \, \dt} \sum_{k=k_a}^{k_b}
      \modulo{\rh{j,k+1} - \rh{j,k}}
      +  \modulo{\rho^{n+1}_{j,k_a} -\ \rh{j,k_a} }
      +  \modulo{\rho^{n+1}_{j,k_b+1} -\ \rh{j,k_b+1}}\right)
      +8 \, M \, V_{max} \, \dt,
  \end{align*}
  where we exploit also Lemma~\ref{lem:pos}. Proceeding recursively we
  finally get, for $1\leq n < N_T-1$,
  \begin{align*}
    \sum_{j=1}^M \, \sum_{k=k_a}^{k_b}
    \modulo{\rho^{n+1}_{j,k+1} - \rho^{n+1}_{j,k}}
    \leq \
    &  e^{4 \, \mathcal{V} \, (n+1) \, \dt } \sum_{j=1}^M \tv (\rho_{o,j})
      + e^{4\, \mathcal{V} \, n \, \dt} \, 8 \, M  \, V_{\max} (n+1)\, \dt
    \\
    & + e^{4\, \mathcal{V} \, n \, \dt }\sum_{m=0}^n\sum_{j=1}^M \left(
      \modulo{\rho^{m+1}_{j,k_a} - \rho^m_{j,k_a}}
      + \modulo{\rho^{m+1}_{j,k_b+1} - \rho^m_{j,k_b+1}}
      \right)
    \\
    \leq \
    & e^{4 \, \mathcal{V} \, T} \left( \sum_{j=1}^M \tv (\rho_{o,j})
      +  8 \, M \, V_{\max}\, T
      + \frac{2 \, C}{s}\right),
  \end{align*}
  where we used also~\eqref{eq:20}.  Noticing that
  $[a,b] \subseteq [x_{k_a}, x_{k_b+1}]$ completes the proof.
\end{proof}

\subsection{Discrete Entropy Inequality}
\label{sec:entropyIneq}

We follow the idea of~\cite[Lemma~5.1]{KRT2002}.

\begin{lemma}
  \label{lem:entropyIneq}
  Let $\brho_o \in \BV (\reali; [0,1]^M)$ with
    $\NORM{\brho_o}< +\infty$. Assume that the CFL condition~\eqref{eq:cfl} holds. Then
  the approximate solution $\brho_\Delta$ defined by~\eqref{eq:6}
  through Algorithm~\ref{alg:1} satisfies the following discrete
  entropy inequality: for all $j=1,\dots,M$, for $k\in \interi$, for
  $n=0,\ldots, N_T-1$ and for any $c \in [0,1]$
  \begin{align}
    \nonumber
    \modulo{\rho^{n+1}_{j,k} - c} -
    \modulo{\rh{j,k} - c}
    + \lambda \left( \mathscr{F}^{c}_{j,k+1/2} (\rh{j,k}, \rh{j,k+1})
    -  \mathscr{F}^{c}_{j, k-1/2} (\rh{j,k-1}, \rh{j,k}) \right)
    &
    \\
    \label{eq:die}
    - \lambda \, \modulo{F_j (x_{k+1/2}, c,c) - F_j (x_{k-1/2},c, c)}
    &
    \\
    \nonumber
    - \dt \, \sgn (\rho^{n+1}_{j,k} - c)
    \left(S_{j-1} (x_{k}, \rho^{n+1/2}_{j-1,k}, \rho^{n+1/2}_{j,k})
    - S_j (x_k, \rho^{n+1/2}_{j,k}, \rho^{n+1/2}_{j+1,k})\right)
    &
      \leq \ 0,
  \end{align}
  with
  \begin{displaymath}
    \mathscr{F}^{c}_{j,k+1/2} (u,w)
    =
    F_j (x_{k+1/2},u \vee c, w \vee c)
    - F_j (x_{k+1/2},u \wedge c, w \wedge c),
  \end{displaymath}
  where $a \vee b = \max\{a,b\}$, $a \wedge b = \min\{a,b\}$.
\end{lemma}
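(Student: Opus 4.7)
The plan is to exploit the operator splitting structure of Algorithm~\ref{alg:1}: I would prove a Kruzhkov-type inequality for the Godunov half-step $\rh{j,k}\mapsto\rho^{n+1/2}_{j,k}$ from~\eqref{eq:scheme}, then a separate (much simpler) one for the source half-step $\rho^{n+1/2}_{j,k}\mapsto\rho^{n+1}_{j,k}$ from~\eqref{eq:scheme2}, and finally add the two to recover~\eqref{eq:die}.

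For the convective half-step I would apply the Crandall--Majda monotone-scheme argument in its discontinuous-flux variant, following~\cite[Lemma~5.1]{KRT2002}. Set
\begin{displaymath}
\mathcal{H}_j(u_-,u_0,u_+) := u_0 - \lambda\bigl(F_j(x_{k+1/2},u_0,u_+) - F_j(x_{k-1/2},u_-,u_0)\bigr).
\end{displaymath}
Since $F_j$ is non-decreasing in its second argument and non-increasing in its third, and since its Lipschitz constant in each argument is bounded by $\mathcal{V}$, the CFL condition~\eqref{eq:cfl} makes $\mathcal{H}_j$ non-decreasing in each of its three entries on $[0,1]^3$; this remains valid across the interface $x=0$ thanks to the form of $F_j(0,\cdot,\cdot)$ in~\eqref{eq:fx}. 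Applying $\mathcal{H}_j$ to the triples $(\rh{j,k-1}\vee c,\rh{j,k}\vee c,\rh{j,k+1}\vee c)$ and $(c,c,c)$, then analogously with $\wedge$ in place of $\vee$, the two resulting monotonicity inequalities subtract using $a\vee b-a\wedge b=|a-b|$. Identifying the finite differences of $F_j$ as the numerical entropy flux $\mathscr{F}^{c}_{j,\cdot}$ and writing $c^{n+1/2}_{j,k}:= c-\lambda(F_j(x_{k+1/2},c,c)-F_j(x_{k-1/2},c,c))$, this yields
\begin{displaymath}
|\rho^{n+1/2}_{j,k} - c^{n+1/2}_{j,k}| + \lambda\bigl(\mathscr{F}^{c}_{j,k+1/2}(\rh{j,k},\rh{j,k+1}) - \mathscr{F}^{c}_{j,k-1/2}(\rh{j,k-1},\rh{j,k})\bigr) \leq |\rh{j,k} - c|.
\end{displaymath}
The triangle inequality $|\rho^{n+1/2}_{j,k}-c|\leq|\rho^{n+1/2}_{j,k}-c^{n+1/2}_{j,k}|+|c^{n+1/2}_{j,k}-c|$ together with $|c^{n+1/2}_{j,k}-c|=\lambda\,|F_j(x_{k+1/2},c,c)-F_j(x_{k-1/2},c,c)|$ then upgrades this to the convective part of~\eqref{eq:die}, i.e.~the same inequality but with $\rho^{n+1/2}_{j,k}$ in place of $\rho^{n+1}_{j,k}$ and no source contribution.

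For the source half-step I would invoke the elementary convexity inequality $|a-c|-|b-c|\leq\sgn(a-c)(a-b)$ with $a=\rho^{n+1}_{j,k}$ and $b=\rho^{n+1/2}_{j,k}$; by~\eqref{eq:scheme2}, $a-b$ is exactly $\dt$ times the source combination appearing in~\eqref{eq:die}. Adding this pointwise estimate to the convective inequality of the previous paragraph and rearranging yields the claim. The main obstacle I foresee is the monotonicity of $\mathcal{H}_j$ on the two interface cells $k=-1,0$, which are precisely the cells where $F_j(x_{k+1/2},c,c)\neq F_j(x_{k-1/2},c,c)$: one has to check that $u_0\mapsto\mathcal{H}_j(u_-,u_0,u_+)$ remains non-decreasing on $[0,1]$ even when $F_j$ mixes $f_{\ell,j}$ and $f_{r,j}$ across $x=0$, which is where the CFL written in terms of the uniform Lipschitz constant $\mathcal{V}$ of~\eqref{eq:normav}, rather than just $V_{\max}$, is essential. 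Away from these two cells $|F_j(x_{k+1/2},c,c)-F_j(x_{k-1/2},c,c)|=0$ and~\eqref{eq:die} collapses to the classical Kruzhkov cell entropy inequality for a smooth flux.
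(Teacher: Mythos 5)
Your proposal is correct and follows essentially the same route as the paper: the paper also introduces the Godunov update operator (there called $\mathscr{G}_{j,k}$), uses its monotonicity under the CFL condition with the triples joined by $\vee$ and $\wedge$ and the identity $a\vee b-a\wedge b=\modulo{a-b}$, and identifies $\mathscr{F}^c_{j,k\pm1/2}$ and $\Delta_k F_j^c=F_j(x_{k+1/2},c,c)-F_j(x_{k-1/2},c,c)$ exactly as you do. The only cosmetic difference is that the paper substitutes~\eqref{eq:scheme2} inside the resulting absolute value and applies $\modulo{a+b}\geq\modulo{a}+\sgn(a)\,b$ in one stroke, whereas you treat the source half-step separately via $\modulo{a-c}-\modulo{b-c}\leq\sgn(a-c)(a-b)$ and then add; the two bookkeeping choices are equivalent.
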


\begin{proof}
  Fix $j \in \{1, \dots, M\}$ and $k \in \interi$. Let
  \begin{displaymath}
    \mathscr{G}_{j,k} (u,w,z) =
    w - \lambda \left[
      F_j (x_{k+1/2}, w, z)  - F_j (x_{k-1/2}, u,w)
    \right].
  \end{displaymath}
  Clearly
  $\rho^{n+1/2}_{j,k} = \mathscr{G}_{j,k} (\rh{j,k-1},
  \rh{j,k},\rh{j,k+1})$. Set
  \begin{displaymath}
    \Delta_k F_j ^c = F_j (x_{k+1/2}, c, c)  - F_j (x_{k-1/2}, c,c),
  \end{displaymath}
  so that
  $ \mathscr{G}_{j,k} (c,c,c) = c - \lambda \, \Delta_k F_j^c$.  By
  the properties of the numerical flux $F_j$, the map
  $\mathscr{G}_{j,k}$ is non decreasing in all its
  arguments. 
  Therefore,
  \begin{align*}
    \mathscr{G}_{j,k} (\rh{j,k-1} \vee c, \rh{j,k} \vee c, \rh{j,k+1} \vee c)
    \geq \
    & \mathscr{G}_{j,k} (\rh{j,k-1}, \rh{j,k}, \rh{j,k+1})  \vee \mathscr{G}_{j,k} (c,c,c),
    \\
    - \mathscr{G}_{j,k} (\rh{j,k-1} \wedge c, \rh{j,k} \wedge c, \rh{j,k+1} \wedge c)
    \geq \
    & - \mathscr{G}_{j,k} (\rh{j,k-1}, \rh{j,k}, \rh{j,k+1})
      \wedge \mathscr{G}_{j,k} (c,c,c).
  \end{align*}
  Sum the two inequalities above: since
  $a \vee b - a \wedge b = \modulo{a-b}$, observe that
  \begin{align*}
    & \mathscr{G}_{j,k} (\rh{j,k-1} \vee c, \rh{j,k} \vee c,
      \rh{j,k+1} \vee c) - \mathscr{G}_{j,k} (\rh{j,k-1} \wedge c,
      \rh{j,k} \wedge c, \rh{j,k+1} \wedge c)
      \\
      = \ & \modulo{\rho^{n}_{j,k} - c} -\lambda \left(
        \mathscr{F}^c_{j,k+1/2} (\rh{j,k}, \rh{j,k+1}) -
        \mathscr{F}^c_{j,k-1/2} (\rh{j,k-1}, \rh{j,k}) \right),
  \end{align*}
  and
  \begin{align*}
    & \mathscr{G}_{j,k} (\rh{j,k-1}, \rh{j,k}, \rh{j,k+1})  \vee \mathscr{G}_{j,k} (c,c,c)
      - \mathscr{G}_{j,k} (\rh{j,k-1}, \rh{j,k}, \rh{j,k+1})
      \wedge \mathscr{G}_{j,k} (c,c,c)
    \\
    = \
    & \modulo{\rho^{n+1/2}_{j,k} - c + \lambda \, \Delta_k F_j^c}
    \\
    = \
    & \modulo{\rho^{n+1}_{j,k} - c + \lambda \, \Delta_k F_j^c
      -\dt \left(
      S_{j-1} (x_k, \rho^{n+1/2}_{j-1,k}, \rho^{n+1/2}_{j,k})
      -
      S_{j} (x_k, \rho^{n+1/2}_{j,k}, \rho^{n+1/2}_{j+1,k})
      \right)
      }
    \\
    \geq \
    & \modulo{\rho^{n+1}_{j,k} - c}
      - \lambda \, \modulo{ \Delta_k F_j^c}
    \\
    & - \dt \, \sgn\!\left(\rho^{n+1}_{j,k} - c\right) \left(
      S_{j-1} (x_k, \rho^{n+1/2}_{j-1,k}, \rho^{n+1/2}_{j,k})
      -
      S_{j} (x_k, \rho^{n+1/2}_{j,k}, \rho^{n+1/2}_{j+1,k})
      \right),
  \end{align*}
  where we used also~\eqref{eq:scheme2} and the inequality
  $\modulo{a+b} \geq \modulo{a} + \sgn(a) \, b$.  The thesis
  immediately follows.
\end{proof}

\subsection{Convergence}
\label{sec:convergence}

\begin{theorem}
  \label{thm:main}
  Let $\brho_o \in \BV (\reali; [0,1]^M)$ with
  $\NORM{\brho_o}< +\infty$. Let $\dx \to 0$ with
  $\lambda = \dx / \dt$ constant and satisfying the CFL
  condition~\eqref{eq:cfl}. The sequence of approximate solutions
  $\brho_\Delta$ constructed through Algorithm~\ref{alg:1} converges
  in $\Lloc1$ to a function
  $\brho \in \L\infty([0,T] \times \reali; [0,1]^M)$ such that
  $\NORM{\brho(t)}= \NORM{\brho_o}$ for $t\in [0,T]$. This limit
  function $\brho$ is a weak entropy solution to
  problem~\eqref{eq:M}--\eqref{eq:idM}--\eqref{eq:idM1} in the sense
  of Definition~\ref{def:sol}.
\end{theorem}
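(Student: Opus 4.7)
The plan is to combine the uniform $\L\infty$ bound, the local spatial $\BV$ estimate, and the $\L1$-continuity-in-time estimate established in the preceding subsections to extract a convergent subsequence by a Helly-type compactness argument, and then to pass to the limit in both the discrete weak formulation arising from Algorithm~\ref{alg:1} and in the discrete entropy inequality of Lemma~\ref{lem:entropyIneq}.

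First I would establish $\Lloc1$-compactness of $\{\brho_\Delta\}$. Lemma~\ref{lem:pos} provides a uniform $\L\infty$ bound, Lemma~\ref{lem:bv} a uniform spatial $\BV$ bound on every compact interval avoiding $x=0$, and Proposition~\ref{prop:l1cont-time} uniform $\L1$-continuity in time. A standard Helly-type (or Kolmogorov--Riesz--Fr\'echet) argument applied on an exhausting sequence of compact subsets of $[0,T] \times (\reali \setminus \{0\})$, together with a diagonal extraction, produces a subsequence converging in $\Lloc1([0,T] \times \reali; [0,1]^M)$ to some limit $\brho$, the singleton $\{x=0\}$ being negligible. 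The identity $\NORM{\brho(t)} = \NORM{\brho_o}$ for $t \in [0,T]$ follows by passing to the limit in Lemma~\ref{lem:l1}, using tightness of the $\L1$-mass on the active half-lines (guaranteed by finite propagation speed under the CFL condition~\eqref{eq:cfl}).

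Second, I would verify item~\ref{it:weak} of Definition~\ref{def:sol}. Multiplying~\eqref{eq:scheme}--\eqref{eq:scheme2} by $\phi(t^n, x_k)\,\dx\,\dt$ for $\phi \in \mathbf{C_c^1}([0,T[\times\reali;\reali)$, performing discrete summation by parts in $n$ and $k$, and collecting terms produces a discrete analogue of the weak formulation. Consistency of the Godunov numerical flux $F_j$ with $f_j$ away from $x=0$ and Lipschitz continuity of $S_j$, combined with $\Lloc1$-convergence of $\brho_\Delta$ and the uniform $\L\infty$-bound, make the passage to the limit routine; the $\mathcal{O}(\dx)$ contribution from the two cells adjacent to $x=0$ vanishes.

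The main obstacle lies in recovering the entropy inequality~\ref{it:entropy}, in particular the boundary contribution $\int_0^T \modulo{f_{r,j}(c)-f_{\ell,j}(c)}\,\phi(t,0)\,\d{t}$. Multiplying~\eqref{eq:die} by a non-negative test function and Abel-summing, most contributions are handled as in the weak formulation. The delicate term is
\begin{displaymath}
\lambda \sum_{n,k} \phi(t^n, x_k)\,\dx\,\modulo{F_j(x_{k+1/2},c,c) - F_j(x_{k-1/2},c,c)},
\end{displaymath}
which vanishes for every $k \notin \{-1, 0\}$ because $F_j(\cdot, c, c)$ is constant on each half-line $\{x<0\}$ and $\{x>0\}$, equal respectively to $f_{\ell, j}(c)$ and $f_{r,j}(c)$. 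The two surviving contributions at the interface $x_{-1/2}=0$, together with the explicit formula~\eqref{eq:fx} for $F_j$ at $x=0$ and the monotonicity of $f_{\ell,j}, f_{r,j}$ around their maxima $\theta_\ell^j, \theta_r^j$, must be analysed through a case distinction on $c$ relative to $\theta_\ell^j$ and $\theta_r^j$ to show that, in the limit, they yield exactly the prescribed boundary term. A minor additional subtlety is that the source term in~\eqref{eq:die} evaluates $\sgn$ at $\rho^{n+1}_{j,k}-c$ rather than at $\rho^{n+1/2}_{j,k}-c$; since $\rho^{n+1}_{j,k} - \rho^{n+1/2}_{j,k} = \mathcal{O}(\dt)$ uniformly and $S_j$ is Lipschitz and bounded, this discrepancy contributes a negligible error in the limit.
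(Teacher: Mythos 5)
Your proposal follows essentially the same route as the paper: compactness of $\brho_\Delta$ from the $\L\infty$ bound, the local spatial $\BV$ estimate away from $x=0$ and the $\L1$-continuity in time, followed by a diagonal extraction; a Lax--Wendroff argument for item~\ref{it:weak}; and passage to the limit in the discrete entropy inequality of Lemma~\ref{lem:entropyIneq}, with the interface term surviving only at $k\in\{-1,0\}$ and identified with $\modulo{f_{r,j}(c)-f_{\ell,j}(c)}$ by the same case analysis on $c$ versus $\theta^j_\ell,\theta^j_r$ that the paper invokes. The additional observations you make (tightness to preserve $\NORM{\brho(t)}$, the $\rho^{n+1}$ versus $\rho^{n+1/2}$ mismatch in the source term) are consistent with, and no less careful than, the paper's treatment.
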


\begin{proof}
  We follow~\cite[Theorem~5.1]{BKT2009}
  and~\cite[Theorem~5.1]{KRT2002}.

  Lemma~\ref{lem:pos} ensures that the sequence of approximate
  solutions $\brho_\Delta$ is bounded in $\L\infty$, in particular
  $\rho_{j,\Delta} (t,x) \in [0,1]$, for all $t>0$, $x \in \reali$ and
  $j=1,\dots,M$. Proposition~\ref{prop:l1cont-time} proves the
  $\L1$-continuity in time of the sequence $\brho_\Delta$, while
  Lemma~\ref{lem:bv} guarantees a bound on the spatial total variation
  in any interval $[a,b]$ not containing $x=0$. Standard compactness
  results imply that, for any interval $[a,b]$ not containing $x=0$,
  there exists a subsequence, still denoted by $\brho_\Delta$,
  converging in $\L1 ([0,T] \times [a,b]; [0,1]^M)$.

  Take now a countable set of intervals $[a_i,b_i]$ such that
  $\bigcup_i [a_i,b_i] = \reali \setminus \{0\}$: by a standard
  diagonal process, we can extract a subsequence, still denoted by
  $\brho_\Delta$, converging in
  $\Lloc1 ([0,T]\times \reali; [0,1]^M)$, and almost everywhere in
  $[0,T] \times \reali$, to a function
  $\brho \in \L\infty ([0,T]\times \reali; [0,1]^M)$. Moreover,
  Proposition~\ref{prop:l1cont-time}, and in particular
  formula~\eqref{eq:8}, implies that this limit function is such that
  $\brho \in \C0 ([0,T]; \L1 (\reali; [0,1]^M))$, with slight abuse of notation concerning the $\L1$-norm.

  It remains to show that the limit function $\brho$ satisfies the
  integral inequalities in Definition~\ref{def:sol}.  Concerning
  point~\ref{it:weak}, i.e.~the weak formulation, it suffices to apply
  a Lax-Wendroff-type calculation, similarly to what has been done
  in~\cite[Theorem~3.1]{KRT2002}. Notice that the presence of the
  source terms does not add any difficulties in the proof.

  As for point~\ref{it:entropy} in Definition~\ref{def:sol}, i.e.~the
  entropy inequality, we follow~\cite[Theorem~5.1]{KRT2002}. Fix
  $j \in \{1,\dots,M\}$.  Let
  $\phi \in \Cc1 ([0,T[ \times \reali; \reali^+)$. Multiply the
  inequality~\eqref{eq:die} by
  $\dx \, \phi_k^n = \dx \, \phi (t^n, x_k)$, then sum over
  $k \in \interi$ and $n=0, \ldots, N_T -1$:
  \begin{align}
    \label{eq:p1}
    0 \geq \
    & \dx \sum_{n=0}^{N_T -1} \sum_{k \in \interi}
      \left[
      \modulo{\rho^{n+1}_{j,k} - c} - \modulo{\rh{j,k} - c}
      \right] \phi^n_k
    \\
    \label{eq:p2}
    & + \dt \sum_{n=0}^{N_T -1} \sum_{k \in \interi}
      \left[
      \mathscr{F}^c_{j,k+1/2} (\rh{j,k}, \rh{j,k+1})
      - \mathscr{F}^c_{j,k-1/2} (\rh{j,k-1}, \rh{j,k})
      \right] \phi^n_k
    \\
    \label{eq:p3}
    & - \dt \sum_{n=0}^{N_T -1} \sum_{k \in \interi}
      \modulo{ F_j (x_{k+1/2}, c, c) - F_j (x_{k-1/2}, c, c)} \, \phi^n_k
    \\
    \label{eq:p4}
    & -  \dt \, \dx \sum_{n=0}^{N_T -1} \sum_{k \in \interi}
      \sgn (\rho^{n+1}_{j,k} - c)
      \left[
      S_{j-1} (x_k, \rho^{n+1/2}_{j-1,k}, \rho^{n+1/2}_{j,k})
      - S_j (x_k, \rho^{n+1/2}_{j,k}, \rho^{n+1/2}_{j+1,k})
      \right] \phi^n_k.
  \end{align}
  Take into account each term separately. Summing by parts and letting
  $\dx \to 0^+$, the Dominated Convergence Theorem yields
  \begin{equation}
    \label{eq:p1ok}
    \begin{aligned}
      [\eqref{eq:p1}] = \ & - \dx \sum_{k \in \interi}
      \modulo{\rho^0_{j,k} - c} \, \phi^0_k - \dx \, \dt
      \sum_{n=1}^{N_T -1} \sum_{k \in \interi} \modulo{\rh{j,k} - c}
      \, \frac{\phi^n_k - \phi^{n-1}_k}{\dt}
      \\
      \underset{\dx \to 0^+}{\longrightarrow} \ & - \int_{\reali}
      \modulo{\rho_{o,j} - c} \, \phi (0,x) \d{x} - \int_0^T
      \int_\reali \modulo{\rho_j (t,x) - c} \, \partial_t \phi (t,x)
      \d{x}\d{t},
    \end{aligned}
  \end{equation}
  and
  \begin{equation}
    \label{eq:p2ok}
    \begin{aligned}
      [\eqref{eq:p2}] = \ & - \dx \, \dt \sum_{n=0}^{N_T -1} \sum_{k
        \in \interi} \mathscr{F}^c_{j,k+1/2} (\rh{j,k}, \rh{j,k+1})
      \frac{\phi^n_k - \phi^n_{k-1}}{\dx}
      \\
      \underset{\dx \to 0^+}{\longrightarrow} \ & - \int_0^T
      \int_\reali \sgn(\rho_j (t,x) - c) \left(f_j (x,\rho_j (t,x)) -
        f_j (x,c)\right) \, \partial_x \phi (t,x) \d{x}\d{t}.
    \end{aligned}
  \end{equation}
  Pass now to~\eqref{eq:p3}. Observe that, by the definition of the
  numerical flux~\eqref{eq:fx}, when $x \neq 0$ it holds
  $F_j (x,c,c) = f_{d,j} (c)$, with $d=\ell$ if $x <0$ and $d=r$ if
  $x>0$. Therefore~\eqref{eq:p3} gives a contribution only for $k=-1$
  and $k=0$:
  \begin{align*}
    [\eqref{eq:p3}] =\
    & - \dt \sum_{n=0}^{N_T -1} \sum_{k=-1}^0
      \modulo{F_j (x_{k+1/2},c,c) - F_j (x_{k-1/2}, c, c)} \, \phi_{k}^n
    \\
    \underset{\dx \to 0^+}{\longrightarrow} \
    & - \int_0^T\left(\modulo{F_j (0,c,c) - f_{\ell,j} (c)}
      + \modulo{f_{r,j} (c) - F_j (0,c,c)}\right) \, \phi (t,0) \d{t}
  \end{align*}
  A careful analysis of all the possible cases yields
  \begin{displaymath}
    \modulo{F_j (0,c,c) - f_{\ell,j} (c)} + \modulo{f_{r,j} (c) - F_j (0,c,c)}
    = \modulo{f_{r,j} (c) - f_{\ell,j} (c)},
  \end{displaymath}
  so that
  \begin{equation}
    \label{eq:p3ok}
    [\eqref{eq:p3}]   \underset{\dx \to 0^+}{\longrightarrow}
    - \int_0^T \modulo{f_{r,j} (c) - f_{\ell,j} (c)} \, \phi (t,0) \d{t}.
  \end{equation}
  Focus now on the last term~\eqref{eq:p4}: by the Dominated
  Convergence Theorem
  \begin{equation}
    \label{eq:p4ok}
    [\eqref{eq:p4}]  \underset{\dx \to 0^+}{\longrightarrow}
    - \int_0^T \int_\reali \sgn(\rho_j- c)
    \left(
      S_{j-1} (x, \rho_{j-1} , \rho_j ) - S_j (x, \rho_j , \rho_{j+1} )
    \right) \phi (t,x) \d{x} \d{t}.
  \end{equation}
  Collecting together~\eqref{eq:p1ok}, \eqref{eq:p2ok},
  \eqref{eq:p3ok} and~\eqref{eq:p4ok} completes the proof.
\end{proof}

\subsection{\texorpdfstring{$\L1$}{L1}-Stability and uniqueness}
\label{sec:unique}

The following Theorem ensures that the solution
to~\eqref{eq:M}--\eqref{eq:idM}--\eqref{eq:idM1}
depends $\L1$-Lipschitz continuously on the initial data, thus
guaranteeing the uniqueness of solutions.

\begin{theorem}
  \label{thm:unique}
  Let $\brho, \, \bsigma$ be two weak entropy solutions, in the sense
  of Definition~\ref{def:sol}, to
  problem~\eqref{eq:M}--\eqref{eq:idM}--\eqref{eq:idM1}
  with initial data
  $\brho_o, \, \bsigma_o \in \L\infty (\reali;
    [0,1]^M)$ and such that
    $\brho_o - \bsigma_o \in \L1 (\reali;[0,1]^M)$.
  Then, for a.e.~$t \in [0,T]$,
  \begin{equation}
    \label{eq:22}
    \sum_{j=1}^M \norma{\rho_j (t) - \sigma_j (t)}_{\L1 (\reali)}
    \leq
    \sum_{j=1}^M \norma{\rho_{o,j} - \sigma_{o,j}}_{\L1 (\reali)}.
  \end{equation}
\end{theorem}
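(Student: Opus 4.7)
The proof follows Kruzkov's doubling-of-variables technique in the form adapted to discontinuous-flux conservation laws by Karlsen--Risebro--Towers, see \cite{KRT2002, KRT2003}. For each $j \in \{1,\dots,M\}$, I apply the entropy inequality of Definition~\ref{def:sol} to $\rho_j(t,x)$ with Kruzkov constant $c = \sigma_j(s,y)$, and symmetrically to $\sigma_j(s,y)$ with $c=\rho_j(t,x)$, against a test function $\Phi_\epsilon(t,x,s,y)$ concentrating on the diagonal $\{t=s,\, x=y\}$. Adding the two inequalities and passing to the limit $\epsilon \to 0^+$ by the standard manipulations yields, for every $j$ and every admissible $\psi \in \Cc1([0,T[\times\reali;\reali^+)$, a Kato-type estimate involving $\modulo{\rho_j-\sigma_j}$, the symmetrized flux difference $\sgn(\rho_j-\sigma_j)(f_j(x,\rho_j)-f_j(x,\sigma_j))$, boundary contributions $\mathcal{I}_j^0(\psi)$ at $x=0$, and the source-difference term $\mathcal{S}_j(\psi) = \int_0^T\!\int_\reali \sgn(\rho_j-\sigma_j)\bigl[\Delta S_{j-1} - \Delta S_j\bigr]\psi\d{x}\d{t}$, where $\Delta S_i = S_i(x,\rho_i,\rho_{i+1}) - S_i(x,\sigma_i,\sigma_{i+1})$.

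The main technical obstacle is the treatment of the interface $x=0$. The boundary singular term $\int_0^T \modulo{f_{r,j}(c)-f_{\ell,j}(c)}\psi(t,0)\d{t}$ of Definition~\ref{def:sol} appears twice in the doubled inequality, once with $c=\sigma_j(t,0)$ and once with $c=\rho_j(t,0)$. Following the argument of \cite{KRT2003}, one combines these two contributions with the flux integrals on either side of $\{x=0\}$, applies the triangle inequality, and shows that their joint contribution $\mathcal{I}_j^0(\psi)$ is non-positive; technically this is achieved by first choosing $\psi$ independent of $x$ on a small neighbourhood of the origin and then letting that neighbourhood shrink. Away from $x=0$ the flux $f_j(x,\cdot)$ is $x$-independent on each side, and the classical Kruzkov cancellation of flux terms applies.

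It then remains to sum over $j$. Telescoping the source contributions by Abel summation, together with $S_0\equiv S_M\equiv 0$ from \eqref{eq:sbordo}, recasts $\sum_{j=1}^M \sgn(\rho_j-\sigma_j)(\Delta S_{j-1} - \Delta S_j)$ as $\sum_{j=1}^{M-1}\bigl[\sgn(\rho_{j+1}-\sigma_{j+1}) - \sgn(\rho_j-\sigma_j)\bigr]\Delta S_j$. Direct inspection of \eqref{eq:2} shows that $(u,w)\mapsto S_{d,j}(u,w)$ is non-decreasing in $u$ and non-increasing in $w$, a monotone-flux structure between consecutive lanes. A case analysis on the four possible combinations of signs of $\rho_j-\sigma_j$ and $\rho_{j+1}-\sigma_{j+1}$ then shows that each summand is pointwise non-positive. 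Choosing $\psi$ approximating $\caratt{[0,\tau]}(t)$ in time and $1$ uniformly in $x$ finally yields~\eqref{eq:22} directly, with no Gronwall factor: the monotone coupling structure of $S_j$ is precisely what provides an $\L1$-contraction along the lane index in addition to the usual in-space contraction from Kruzkov's theory.
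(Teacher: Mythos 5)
Your proposal is correct in its essential content, but it reaches \eqref{eq:22} by a genuinely different route than the paper. Both arguments start from the Karlsen--Risebro--Towers doubling technique applied lane by lane, and both treat the interface at the same level of detail (your claim that the joint boundary contribution at $x=0$ is non-positive plays the role of the paper's error term $E\le 0$, justified there by reference to~\cite{KRT2003,BKT2009}). The difference lies in the coupling source. You telescope $\sum_{j=1}^M \sgn(\rho_j-\sigma_j)\bigl(\Delta S_{j-1}-\Delta S_j\bigr)$ using $S_0=S_M=0$ from~\eqref{eq:sbordo} and exploit the monotonicity of $S_j$ in~\eqref{eq:2} (non-decreasing in the own-lane density, non-increasing in the next-lane density) to show each summand $\bigl[\sgn(\rho_{j+1}-\sigma_{j+1})-\sgn(\rho_j-\sigma_j)\bigr]\Delta S_j$ is pointwise non-positive, which yields the contraction directly, with no Gronwall factor. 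The paper uses exactly the same monotonicity of $S_j$, but only to bound the source difference by $\mathcal{V}\bigl((\rho_{j-1}-\sigma_{j-1})^+ +(\rho_{j+1}-\sigma_{j+1})^+\bigr)$ on $\{\rho_j>\sigma_j\}$; it then applies Gronwall to $\Theta(t)=\sum_j\int(\rho_j-\sigma_j)^+\d{x}$ to obtain the comparison principle, and converts monotonicity plus conservation of $\NORM{\cdot}$ into the $\L1$-contraction via the Crandall--Tartar lemma. Your route is more direct and makes the lane coupling itself visibly contractive; the paper's route is slightly more forgiving on technical points. One such point you should address explicitly: your four-case analysis uses strict signs (implicitly $\sgn(0)=0$), but on a coincidence set $\{\rho_j=\sigma_j\}$ of positive measure the doubling argument does not deliver that convention for free, since $\Delta S_{j-1}-\Delta S_j$ depends on the neighbouring lanes and need not vanish there for merely $\L\infty$ solutions. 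The fix is easy and worth stating: your case analysis survives with an arbitrary multiplier $s_j\in[-1,1]$ in place of $\sgn(0)$ (for instance, $\rho_j=\sigma_j$ and $\rho_{j+1}>\sigma_{j+1}$ give $\Delta S_j\le 0$ while the coefficient $1-s_j\ge0$), so the cancellation is robust; the paper sidesteps the same issue by passing to positive parts and a Heaviside weight. Finally, when letting the test function tend to $\caratt{[0,\tau]}$ in time and to $1$ in space you should note that $\rho_j(t)-\sigma_j(t)\in\L1(\reali)$, which requires the hypothesis $\brho_o-\bsigma_o\in\L1$ together with a standard truncation/finite-propagation argument.
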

\begin{remark}
  Notice that the sums appearing in~\eqref{eq:22} are actually sums
  over the \emph{active} lanes only, the terms corresponding to
  fictive lanes being equal to $0$.
\end{remark}

\begin{proof}
  The idea is to combine together the results contained in~\cite[\S~2
  and \S~5]{KRT2003}, in particular~\cite[Theorem~5.1]{KRT2003}, and
  in~\cite[Theorem~3.1]{BKT2009}, and then
  adapt~\cite[Theorem~3.3]{HoldenRisebro}.

  Indeed, fix $j\in\{1, \dots, M\}$. Following~\cite[Theorem~A.1 and
  Formula~(2.22)]{KRT2003}, it is possible to derive the following
  inequality for any
  $\phi \in \Cc1 (\,]0,T[ \times \reali\setminus\{0\}; \reali^+)$
  \begin{equation}
    \label{eq:1}
    \begin{aligned}
      -\int_0^T \int_\reali \bigl\{ \modulo{\rho_j - \sigma_j}
      \partial_t \phi + \sgn(\rho_j - \sigma_j) \left( f_j (x,\rho_j)
        - f_j (x,\sigma_j)\right) \partial_x \phi &
      \\
      + \sgn(\rho_j - \sigma_j)\left(S (x, \brho, j) - S (x, \bsigma,
        j)\right) \phi & \bigr\} \d{x}\d{t} \leq 0,
    \end{aligned}
  \end{equation}
  where, for the sake of simplicity, we set
  \begin{equation}
    \label{eq:S}
    S (x, \boldsymbol{u}, j) =  S_{j-1} (x,u_{j-1}, u_j) - S_j (x, u_j, u_{j+1}).
  \end{equation}
  Inspired by~\cite[Theorem~3.3]{HoldenRisebro}, since $\rho_j$,
  respectively $\sigma_j$, satisfies Point~\ref{it:weak} in
  Definition~\ref{def:sol}, we subtract to the above inequality the
  equation for $\rho_j$ and add the equation for $\sigma_j$, arriving
  at
  \begin{align*}
    -\int_0^T\int_\reali \bigl\{\left(\rho_j - \sigma_j\right)^+\partial_t \phi
    + H (\rho_j - \sigma_j) \left(f_j (x,\rho_j) - f_j (x, \sigma_j)\right) \partial_x\phi
    &
    \\
    + H (\rho_j - \sigma_j) \left(S (x,\brho,j) - S(x,\bsigma,j)\right)\phi
    & \bigr\} \d{x} \d{t}\leq 0,
  \end{align*}
  for $\phi \in \Cc1 (\,]0,T[ \times \reali\setminus\{0\};
  \reali^+)$. Now, we extend the above inequality to
  $\Phi \in \Cc1 (\,]0,T[ \times \reali; \reali^+)$. The procedure is
  similar to that in~\cite[Theorem~2.1]{KRT2003} and it leads to
  \begin{equation}
    \label{eq:5}
    \begin{aligned}
      -\int_0^T\int_\reali \bigl\{\left(\rho_j - \sigma_j\right)^+
      \partial_t \Phi + H (\rho_j - \sigma_j) \left(f_j (x,\rho_j) -
        f_j (x, \sigma_j)\right) \partial_x \Phi &
      \\
      + H (\rho_j - \sigma_j) \left(S (x,\brho,j) -
        S(x,\bsigma,j)\right) \Phi & \bigr\} \d{x} \d{t} \leq E,
    \end{aligned}
  \end{equation}
  for all $\Phi \in \Cc1 (\,]0,T[ \times \reali; \reali^+)$, where
  \begin{displaymath}
    E = \int_0^T\left[  H(\rho_j - \sigma_j) \left( f_j (x,\rho_j) - f_j (x,\sigma_j)\right)
    \right]_{x=0^-}^{x=0^+} \Phi (t,0) \d{t}.
  \end{displaymath}
  Analogously to~\cite[Theorem~2.1]{KRT2003}
  and~\cite[Theorem~3.1]{BKT2009}, it can be proven that $E\leq 0$.
  Following again~\cite[Theorem~3.3]{HoldenRisebro} and choosing
  $\Phi \approx \boldsymbol{1}_{[0,\tau]}$, for $\tau \in \,]0,T]$, we
  get
  \begin{equation}
    \label{eq:18}
    \begin{aligned}
      \int_\reali\left(\rho_j (\tau) - \sigma_j (\tau)\right)^+ \d{x}
      \leq \ & \int_\reali\left(\rho_j (0) - \sigma_j
        (0)\right)^+\d{x}
      \\
      & + \int_0^\tau \int_\reali H (\rho_j - \sigma_j) \left(S
        (x,\brho,j) - S(x,\bsigma,j)\right) \d{x} \d{t}.
    \end{aligned}
  \end{equation}
  It is easy to verify that, for fixed $x$, the map $S_j (x,u,w)$
  defined in~\eqref{eq:4}, together with~\eqref{eq:2}, is non
  decreasing in the second argument and non increasing in the third:
  setting for the sake of convenience
  $\Delta_+ v_j =v_{j+1} (x,w) - v_{j} (x,u)$ we obtain
  \begin{align*}
    \partial_u S_j  = \
    & (\Delta_+v_j )^+
      -  v'_j (x,u) \, w
      - H (\Delta_+ v_j)
      \, v'_j (x,u) \, (u-w)
      \geq 0,
    \\
    \partial_w S_j = \
    & - (\Delta_+v_j)^- 
      +  v'_{j+1} (x,w) \, w
      + H (\Delta_+v_j) 
      \, v'_{j+1} (x,w) \, (u-w)
      \leq 0.
  \end{align*}
  Hence, if $\rho_j > \sigma_j$ we have
  \begin{align*}
    & S (x,\brho,j) - S(x,\bsigma,j)
    \\ =  \
    & S_{j-1} (x,\rho_{j-1}, \rho_j)-S_{j-1} (x,\sigma_{j-1}, \sigma_j)
      - S_j (x, \rho_j, \rho_{j+1}) + S_j (x, \sigma_j, \sigma_{j+1})
    \\
    \leq \
    &  S_{j-1} (x,\rho_{j-1}, \sigma_j)-S_{j-1} (x,\sigma_{j-1}, \sigma_j)
      - S_j (x, \rho_j, \rho_{j+1}) + S_j (x, \rho_j, \sigma_{j+1})
    \\
    = \
    & \partial_u S_{j-1} (x, \pi_{j-1}, \sigma_j)\, (\rho_{j-1} - \sigma_{j-1}) -
      \partial_w S_{j} (x,\sigma_j, \pi_{j+1}) \,(\rho_{j+1}- \sigma_{j+1})
    \\
    \leq \
    & \mathcal{V}
      \left((\rho_{j-1} - \sigma_{j-1})^+ + (\rho_{j+1}- \sigma_{j+1})^+\right),
  \end{align*}
  with $\pi_{j\pm1}$ in the interval between $\rho_{j\pm1}$ and
  $\sigma_{j\pm1}$ respectively and $\mathcal{V}$ as
  in~\eqref{eq:normav}. Thus,
  \begin{equation}
    \label{eq:25}
    \sum_{j=1}^M
    H (\rho_j - \sigma_j) \left(S (x,\brho,j) - S(x,\bsigma,j)\right)
    \leq 2 \, \mathcal{V} \,  \sum_{j=1}^M (\rho_j - \sigma_j)^+.
  \end{equation}
  Define
  \begin{displaymath}
    \Theta (t) = \sum_{j=1}^M \int_\reali \left(\rho_j (t,x) - \sigma_j (t,x)\right)^+ \d{x}.
  \end{displaymath}
  By~\eqref{eq:18} and~\eqref{eq:25} it follows that
  \begin{displaymath}
    \Theta (\tau) \leq \Theta (0) + 2 \, \mathcal{V} \int_0^\tau \Theta (t)  \d{t}.
  \end{displaymath}
  Gronwall's inequality then implies that
  $\Theta (t) \leq \Theta (0) \, \exp \left( 2 \, \mathcal{V} \,
    t\right)$. Therefore, if $\Theta (0)=0$,
  i.e.~$\rho_{o,j}(x)\leq \sigma_{o,j} (x)$ a.e.~in $\reali$ and for
  all $j$, then $\Theta (t)=0$ for $t>0$,
  i.e.~$\rho_{j}(t,x)\leq \sigma_{j} (t,x)$ a.e.~in $\reali$ and for
  all $j$. An application of the Crandall--Tartar
  Lemma~\cite[Lemma~2.13]{HoldenRisebroBook2015} concludes the proof
  of the $\L1$--contractivity.
\end{proof}

\section{Numerical experiments}
\label{sec:num}

We present some applications of our result in test cases describing
realistic road junction examples. The study is not exhaustive: in
particular, specific cases of diverging junctions could be handled
adding some information on drivers' routing preferences upstream the
junction. Yet, these situations go beyond the scope of this paper.

In all the numerical experiments, we choose
\begin{displaymath}
  v_{d,j} (u) = V_d (1-u)
  \quad
  \hbox{for}~d=\ell, r~ \hbox{and}~j=1,\ldots,M,
\end{displaymath}
thus the maximal speed is the same for all the lanes before,
respectively after, $x=0$. In particular, in each situation we
consider two cases, $V_\ell < V_r$ and $V_\ell > V_r$.

\subsection{1-to-1 junction: from 2 to 3 lanes}
\label{sec:incr}

We consider problem~\eqref{eq:M}--\eqref{eq:idM}--\eqref{eq:sghost},
with $\M_\ell=\left\{1,2\right\}$, $\M_r=\left\{1,2,3\right\}$ and
$S_{\ell,2}(u,w)=0$.
\begin{center}
  \begin{tikzpicture}[yscale=0.5]
    \draw[ultra thick] (0,0) -- (6,0) node[pos=0.25, align=center, above]{lane 1}
    node[pos=0.5, align=center, below] {$x=0$};
    \draw[ultra thick, dashed] (0,1) -- (6,1) node[pos=0.25, align=center, above]{lane 2};
    \draw[ultra thick] (0,2) -- (3,2) (3,2) |- (6,3);
    \draw[ultra thick, dashed] (3,2) -- (6,2) node[pos=0.5,
    align=center, above]{lane 3}; \draw[thick, dotted] (3,0) -- (3,2);
  \end{tikzpicture}
\end{center}
The initial data are chosen as follows:
\begin{align}
  \label{eq:27}
  \rho_{o,1} (x) = \
  & 0.7,
  &
    \rho_{o,2} (x) = \
  & 0.6,
  &
    \rho_{o,3} (x) = \
  & 0.5 * \caratt{[0,+\infty[} (x).
\end{align}
Moreover, we choose $ V_\ell = 1.5$, and $V_{r}=1$ or $2$
respectively.  Figure~\ref{fig:2to3} displays the solutions in both
cases at time $t= 1$: on the right the maximal speed decreases, on
the left it is increasing. We notice the effect of the flow between
neighbouring lanes: all along the $x$-axis vehicles moves from lane 1
to lane 2, for $x>0$ vehicles pass also from lane 2 to lane 3, and
this is particularly evident near $x=0$.
\begin{figure}[!h]
  \centering \includegraphics[width=0.48\textwidth, trim=20 5 35 10,
  clip=true]{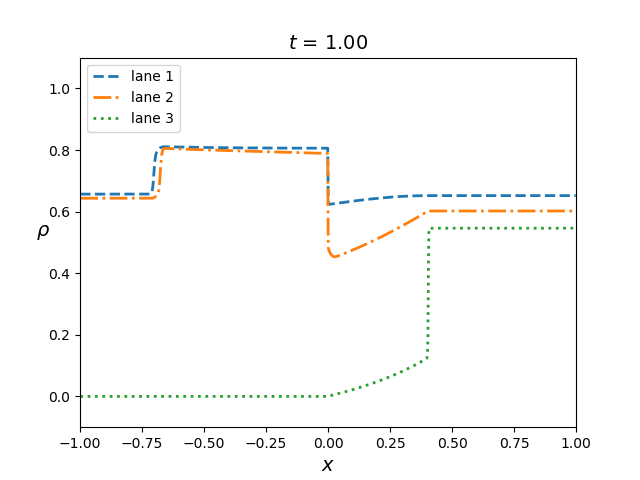}\hspace{8pt}%
  \includegraphics[width=0.48\textwidth, trim=20 5 35 10, clip=true]{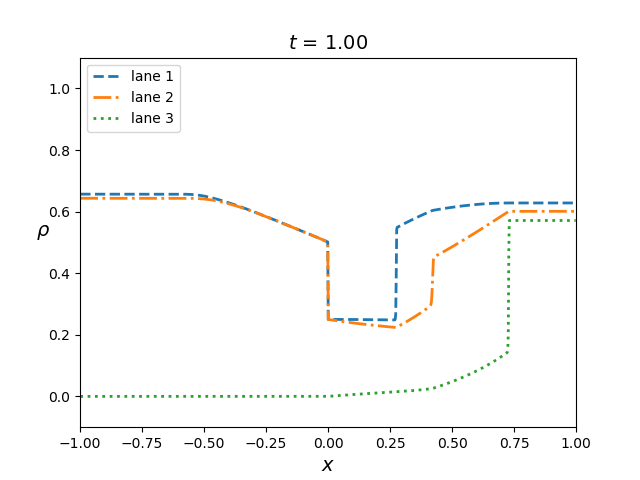}\\
  \caption{Solutions
    to~\eqref{eq:M}--\eqref{eq:idM}--\eqref{eq:sghost}, with
    $\M_\ell=\left\{1,2\right\}$, $\M_r=\left\{1,2,3\right\}$ and
    initial data~\eqref{eq:27} at time $t=1$.  $V_\ell = 1.5$: left
    $V_r=1$, right $V_r=2$.}
  \label{fig:2to3}
\end{figure}

\subsection{1-to-1 junction: from 3 to 2 lanes}
\label{sec:decr2}

We consider problem~\eqref{eq:M}--\eqref{eq:idM1}--\eqref{eq:sghost},
with $\M_\ell=\left\{1,2,3\right\}$, $\M_r=\left\{1,2\right\}$ and
$S_{r,2}(u,w)=0$.
\begin{center}
  \begin{tikzpicture}[yscale=0.5]
    \draw[ultra thick] (0,0) -- (6,0) node[pos=0.25, align=center,
    above]{lane 1} node[pos=0.5, align=center, below] {$x=0$};
    \draw[ultra thick, dashed] (0,1) -- (6,1) node[pos=0.25,
    align=center, above]{lane 2}; \draw[ultra thick] (0,3) -| (3,2)
    (3,2) -- (6,2); \draw[ultra thick, dashed] (0,2) -- (3,2)
    node[pos=0.5, align=center, above]{lane 3}; \draw[thick, dotted]
    (3,0) -- (3,2);
  \end{tikzpicture}
\end{center}
The initial data are chosen as follows:
\begin{align}
  \label{eq:26}
  \rho_{o,1} (x) = \
  & 0.7,
  &
    \rho_{o,2} (x) = \
  & 0.6,
  &
    \rho_{o,3} (x) = \
  & 0.5 \, \caratt{]-\infty,0]} (x)+ 1 \,\caratt{]0,+\infty[} (x).
\end{align}
We choose $ V_\ell = 1.5$, and $V_{r}=1$ or $2$ respectively.
Figure~\ref{fig:3to2} displays the solutions in both cases at time
$t= 1$: on the right the maximal speed decreases, on the left it is
increasing. We display the solution also for the positive part of the
third lane: it is constantly equal to the maximal density $1$. As in
the case of an increasing number of lanes, we notice the effect of the
flow between neighbouring lanes. Observe that no vehicle passes from
lane 3 to lane 2 for $x>0$: indeed, lane 3 for $x>0$ is a fictive lane
and we impose~\eqref{eq:sghost} ($\sr{2} (u,w)=0$).
\begin{figure}[!h]
  \includegraphics[width=0.48\textwidth, trim=20 5 35 10,
  clip=true]{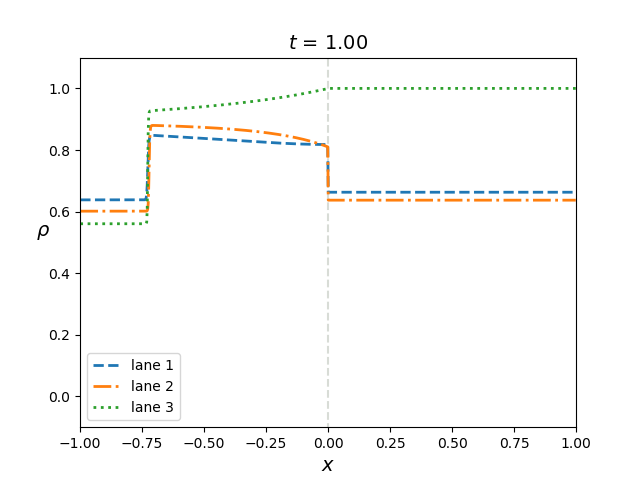}\hspace{8pt}%
  \includegraphics[width=0.48\textwidth, trim=20 5 35 10,
  clip=true]{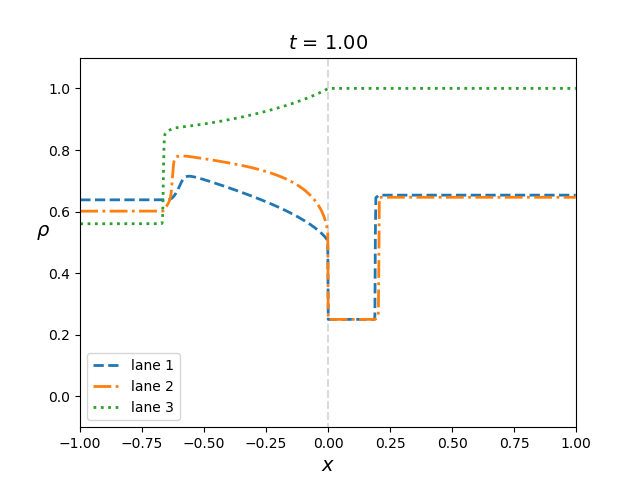}
  \caption{Solutions
    to~\eqref{eq:M}--\eqref{eq:idM1}--\eqref{eq:sghost}, with
    $\M_\ell=\left\{1,2,3\right\}$, $\M_r=\left\{1,2\right\}$ and
    initial data~\eqref{eq:26} at time $t=1$.  $V_\ell = 1.5$: left
    $V_r=1$, right $V_r=2$.}
  \label{fig:3to2}
\end{figure}

Focus on the queue forming before $x=0$ and compare the two cases,
$V_r<V_\ell$ and $V_r>V_\ell$. When the maximal speed diminishes, the
queue is longer and the number of vehicles in the queue
is greater with respect to the case of increasing maximal speed: for
$x<0$, in the former case it is more difficult for vehicles in lane 3
to pass in lane 2, since here the decrease in the maximal speed
diminishes the flow at $x=0$.

\subsection{2-to-1 junction : from 3 to 2 lanes}
\label{sec:32plud}

We consider the same setting of Section~\ref{sec:decr2}, thus
problem~\eqref{eq:M}--\eqref{eq:idM1}--\eqref{eq:sghost}, with
$\M_\ell=\left\{1,2,3\right\}$, $\M_r=\left\{1,2\right\}$ and initial
data~\eqref{eq:26}, with the additional assumption that there is no
flow of vehicles between the first and the second lane on
$]-\infty,0[$, i.e.~$\se{1} (u,w)=0$ (we keep $S_{r,2}(u,w)=0$):
\begin{center}
  \begin{tikzpicture}[yscale=0.5]
    \draw[ultra thick] (0,0) -- (6,0) node[pos=0.25, align=center,
    above]{lane 1} node[pos=0.5, align=center, below] {$x=0$};
    \draw[ultra thick] (0,1) -- (3,1) node[pos=0.5, align=center,
    above]{lane 2}; \draw[ultra thick] (0,3) -| (3,2) (3,2) -- (6,2);
    \draw[ultra thick, dashed] (3,1) --(6,1) (0,2) -- (3,2)
    node[pos=0.5, align=center, above]{lane 3}; \draw[thick, dotted]
    (3,0) -- (3,2);
  \end{tikzpicture}
\end{center}
We choose $V_\ell = 1.5$ and $V_r \in \{1, \, 1.5, \,
2\}$. Figure~\ref{fig:3to2Plus} displays the solution in the three
cases at time $t=0.5$: on the right the maximal speed decreases, in
the centre it stays constant, on the left it increases. As before, we
display the solution also for the positive part of the third lane,
where it is constantly equal to $1$.
\begin{figure}[!h]
  \centering \includegraphics[width=0.31\textwidth, trim=20 5 35 10,
  clip=true]{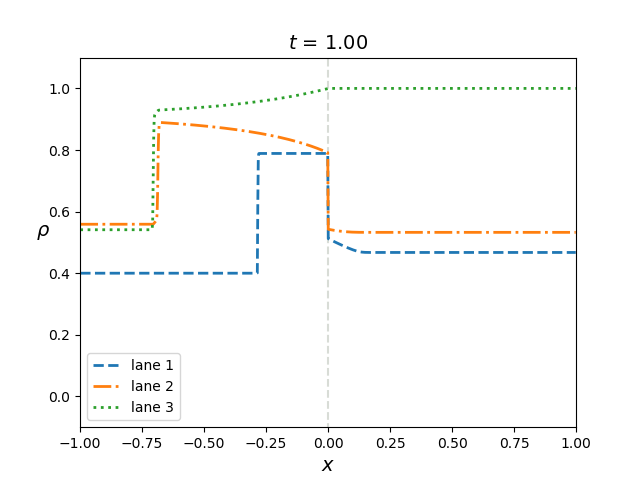}\hspace{8pt}%
  \includegraphics[width=0.31\textwidth, trim=20 5 35 10,
  clip=true]{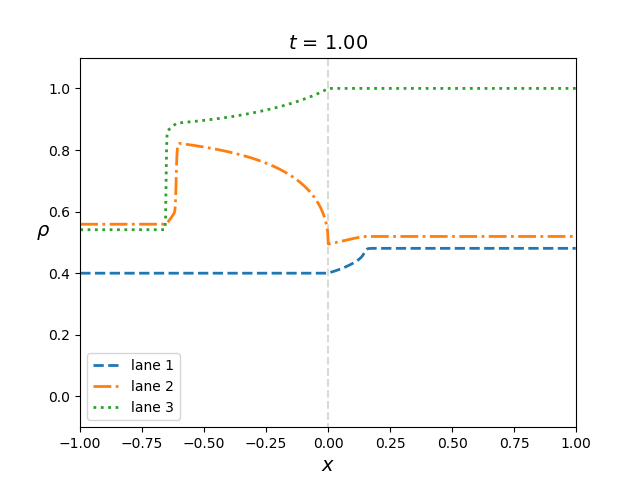}\hspace{8pt}%
  \includegraphics[width=0.31\textwidth, trim=20 5 35 10,
  clip=true]{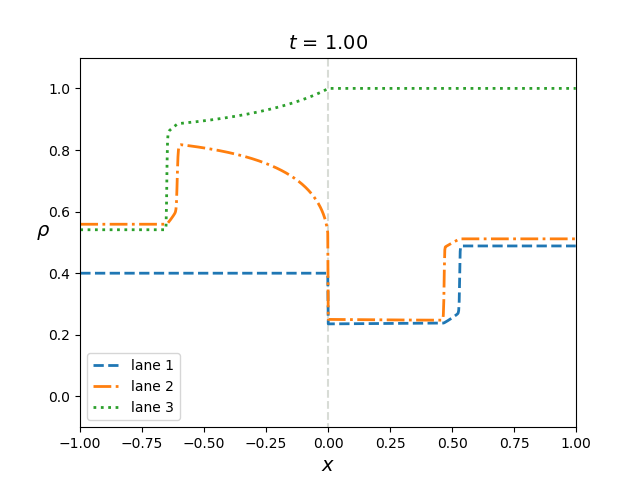}
  \caption{Solutions
    to~\eqref{eq:M}--\eqref{eq:idM1}--\eqref{eq:sghost} and
    $\se{1} (u,w)=0$, with $M_\ell=3$, $M_r=2$ and initial
    data~\eqref{eq:26} at time $t=1$.  $V_\ell = 1.5$: left
    $V_r=1$, centre $V_r=1.5$, right $V_r=2$.}
  \label{fig:3to2Plus}
\end{figure}

\subsection{2-to-1 junction: from 4 to 2 lanes}
\label{sec:42}

We consider the
problem~\eqref{eq:M}--\eqref{eq:idM1}--\eqref{eq:sghost}, with
$\M_\ell=\left\{1,2,3,4 \right\}$, $\M_r=\left\{2,3\right\}$ and
initial data
\begin{equation}
  \label{eq:28}
  \begin{aligned}
    \rho_{o,1} (x) = \ & 0.7 \, \caratt{]-\infty,0]} (x)+ 1
    \,\caratt{]0,+\infty[} (x), & \rho_{o,2} (x) = \ & 0.5,
    \\
    \rho_{o,3} (x) = \ & 0.6, & \rho_{o,4} (x) = \ & 0.4 \,
    \caratt{]-\infty,0]} (x)+ 1 \,\caratt{]0,+\infty[} (x),
  \end{aligned}
\end{equation}
with the additional assumption that there is no flow of vehicles
between the second and the third lane on $]-\infty,0[$,
i.e.~$\se{2} (u,w)=0$ (we also impose $S_{r,1}(u,w)=S_{r,3}(u,w)=0$).
The situation under consideration looks as follows:
\begin{center}
  \begin{tikzpicture}[yscale=0.5]
    \draw[ultra thick] (0,0) -| (3,1) (3,1) -- (6,1) (0,2) -- (3,2)
    node[pos=0.5, align=center, below]{lane 2} node[pos=0.5,
    align=center, above]{lane 3} (3,3) -- (6,3) (0,4) -| (3,3);
    \draw[ultra thick, dashed] (0,1) -- (3,1) node[pos=0.5,
    align=center, below]{lane 1} (3,2) --(6,2) (0,3) -- (3,3)
    node[pos=0.5, align=center, above]{lane 4}; \draw[thick, dotted]
    (3,1) -- (3,3); \draw (3,0) node[below] {$x=0$};
  \end{tikzpicture}
\end{center}
We choose $V_\ell = 1.5$ and $V_r \in \{1, \, 1.5, \,
2\}$. Figure~\ref{fig:4to2} displays the solution in the three cases
at time $t=1$: on the right the maximal speed decreases, in the
centre it stays constant, on the left it increases. As before, we
display the solution also for the positive part of the first and
fourth lane, where it is constantly $1$.
\begin{figure}[!h]
  \centering \includegraphics[width=0.31\textwidth, trim=20 5 35 10,
  clip=true]{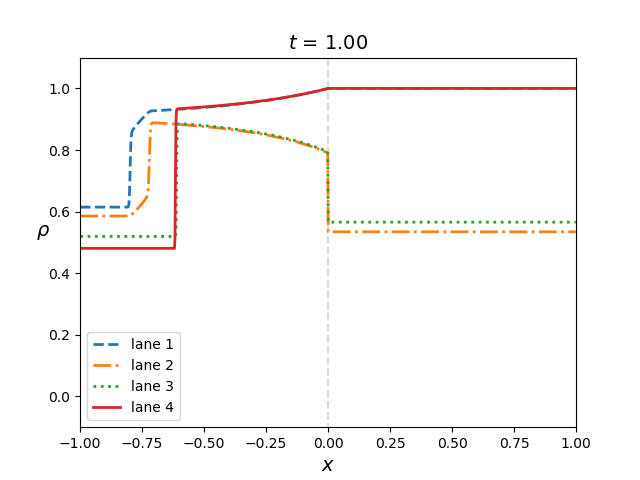}\hspace{8pt}%
  \includegraphics[width=0.31\textwidth, trim=20 5 35 10,
  clip=true]{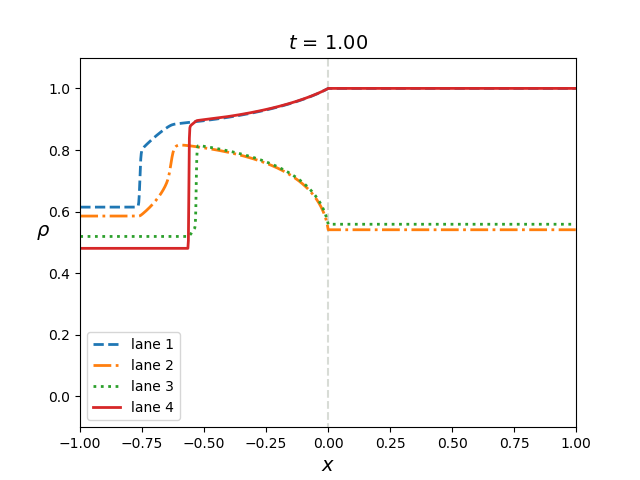}\hspace{8pt}%
  \includegraphics[width=0.31\textwidth, trim=20 5 35 10,
  clip=true]{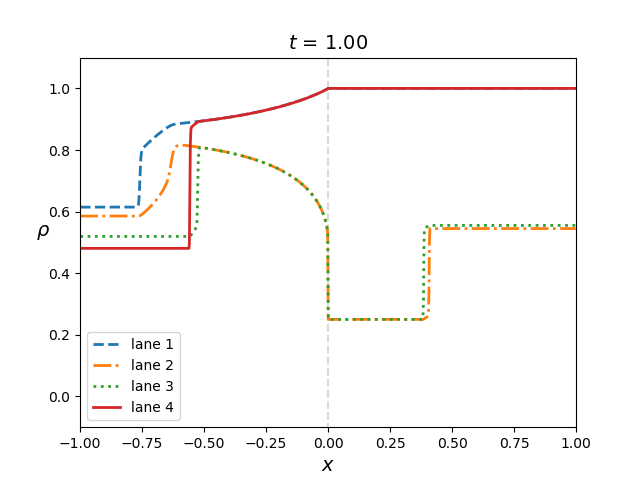}
  \caption{Solutions
    to~\eqref{eq:M}--\eqref{eq:idM1}--\eqref{eq:sghost} and
    $\se{21} (u,w)=0$, with $\M_\ell=\left\{1,2,3,4 \right\}$,
    $\M_r=\left\{2,3\right\}$ and initial data~\eqref{eq:28} at time
    $t=1$.  $V_\ell = 1.5$: left $V_r=1$, centre $V_r=1.5$, right
    $V_r=2$.}
  \label{fig:4to2}
\end{figure}

\bigskip

\noindent\textbf{Acknowledgement:}
The authors are grateful to Rinaldo M.~Colombo for stimulating
discussions.

{ \small

  \bibliography{multilane}

\begin{thebibliography}{10}

\bibitem{AJVG2004}
Adimurthi, J.~Jaffr\'{e}, and G.~D. Veerappa~Gowda.
\newblock Godunov-type methods for conservation laws with a flux function
  discontinuous in space.
\newblock {\em SIAM J. Numer. Anal.}, 42(1):179--208, 2004.

\bibitem{AwRascle}
A.~Aw and M.~Rascle.
\newblock Resurrection of ``second order'' models of traffic flow.
\newblock {\em SIAM J. Appl. Math.}, 60(3):916--938, 2000.

\bibitem{BenzoniColombo}
S.~Benzoni-Gavage and R.~M. Colombo.
\newblock An {$n$}-populations model for traffic flow.
\newblock {\em European J. Appl. Math.}, 14(5):587--612, 2003.

\bibitem{BGKT2008}
R.~B\"{u}rger, A.~Garc\'{i}a, K.~H. Karlsen, and J.~D. Towers.
\newblock A family of numerical schemes for kinematic flows with discontinuous
  flux.
\newblock {\em J. Engrg. Math.}, 60(3-4):387--425, 2008.

\bibitem{BKT2009}
R.~B\"{u}rger, K.~H. Karlsen, and J.~D. Towers.
\newblock An {E}ngquist-{O}sher-type scheme for conservation laws with
  discontinuous flux adapted to flux connections.
\newblock {\em SIAM J. Numer. Anal.}, 47(3):1684--1712, 2009.

\bibitem{CC2006}
R.~M. Colombo and A.~Corli.
\newblock Well posedness for multilane traffic models.
\newblock {\em Ann. Univ. Ferrara Sez. VII Sci. Mat.}, 52(2):291--301, 2006.

\bibitem{HoldenRisebroBook2015}
H.~Holden and N.~H. Risebro.
\newblock {\em Front tracking for hyperbolic conservation laws}, volume 152 of
  {\em Applied Mathematical Sciences}.
\newblock Springer, Heidelberg, second edition, 2015.

\bibitem{HoldenRisebro}
H.~Holden and N.~H. Risebro.
\newblock Models for dense multilane vehicular traffic.
\newblock Preprint, 2018.

\bibitem{KRT2002}
K.~H. Karlsen, N.~H. Risebro, and J.~D. Towers.
\newblock Upwind difference approximations for degenerate parabolic
  convection-diffusion equations with a discontinuous coefficient.
\newblock {\em IMA J. Numer. Anal.}, 22(4):623--664, 2002.

\bibitem{KRT2003}
K.~H. Karlsen, N.~H. Risebro, and J.~D. Towers.
\newblock {$L^1$} stability for entropy solutions of nonlinear degenerate
  parabolic convection-diffusion equations with discontinuous coefficients.
\newblock {\em Skr. K. Nor. Vidensk. Selsk.}, 3:1--49, 2003.

\bibitem{KW1}
A.~Klar and R.~Wegener.
\newblock A hierarchy of models for multilane vehicular traffic. {I}.
  {M}odeling.
\newblock {\em SIAM J. Appl. Math.}, 59(3):983--1001, 1999.

\bibitem{KW2}
A.~Klar and R.~Wegener.
\newblock A hierarchy of models for multilane vehicular traffic. {II}.
  {N}umerical investigations.
\newblock {\em SIAM J. Appl. Math.}, 59(3):1002--1011, 1999.

\bibitem{LW1955}
M.~J. Lighthill and G.~B. Whitham.
\newblock On kinematic waves. {II}. {A} theory of traffic flow on long crowded
  roads.
\newblock {\em Proc. Roy. Soc. London. Ser. A.}, 229:317--345, 1955.

\bibitem{Richards1956}
P.~I. Richards.
\newblock Shock waves on the highway.
\newblock {\em Operations Res.}, 4:42--51, 1956.

\end{thebibliography}

  \bibliographystyle{abbrv}

}

\end{document}